\newtheorem{theorem}{Theorem}
\newtheorem{lemma}[theorem]{Lemma}
\newtheorem{proposition}[theorem]{Proposition}
\theoremstyle{definition}
\newtheorem{definition}[theorem]{Definition}
\newcommand{\Z}{\mathbb{Z}}
\newcommand{\loopinsert}{E_1}
\newcommand{\edgedouble}{E_2}
\newcommand{\cutedgedouble}{E_3}
\newcommand{\pairinsert}{E_4}
\newcommand{\plantri}{\textit{plantri} }
\newcommand{\valgrind}{\textit{valgrind} }
\newcommand{\pdcode}{pd-code }
\newcommand{\pdcodes}{pd-codes }
\newcommand{\edges}{\operatorname{edges}}
\newcommand{\head}{\operatorname{head}}
\newcommand{\tail}{\operatorname{tail}}
\newcommand{\Cr}{\operatorname{Cr}}
\newcommand{\tref}{3_1}
\newcommand{\twistunknot}{\raisebox{-0.17\baselineskip}{\includegraphics[height=0.81\baselineskip]{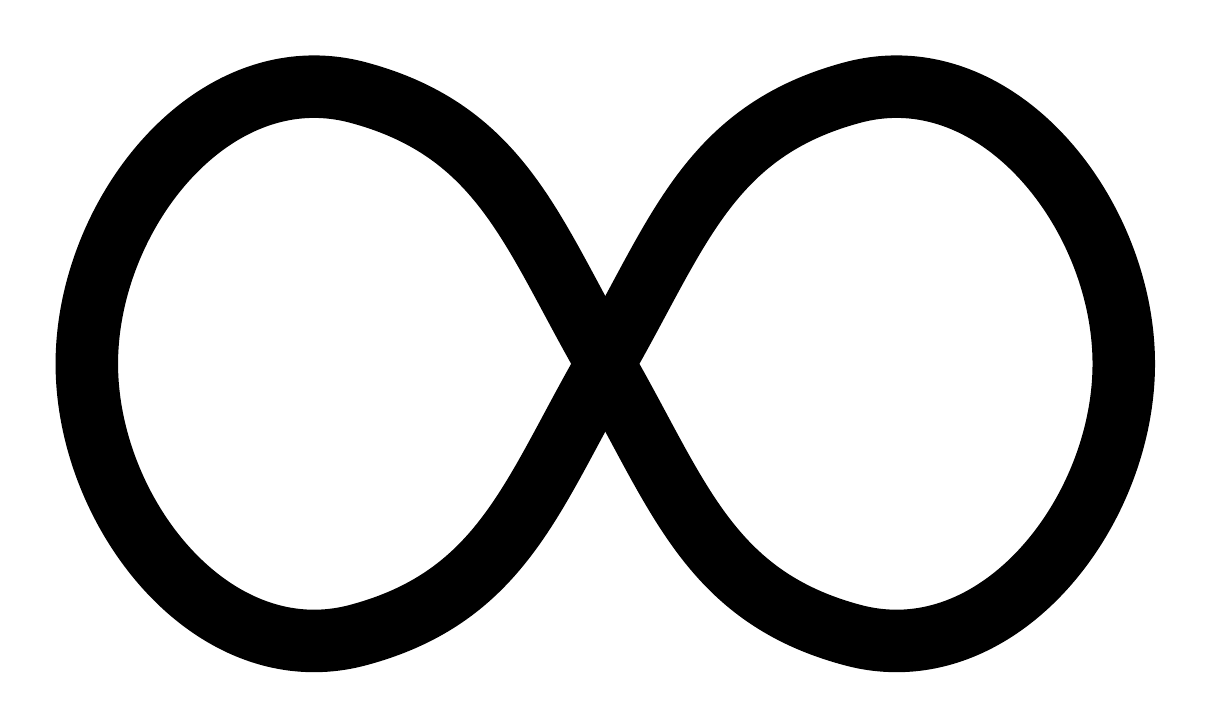}}}
\newcommand{\hopfgraph}{\raisebox{-0.17\baselineskip}{\includegraphics[height=0.81\baselineskip]{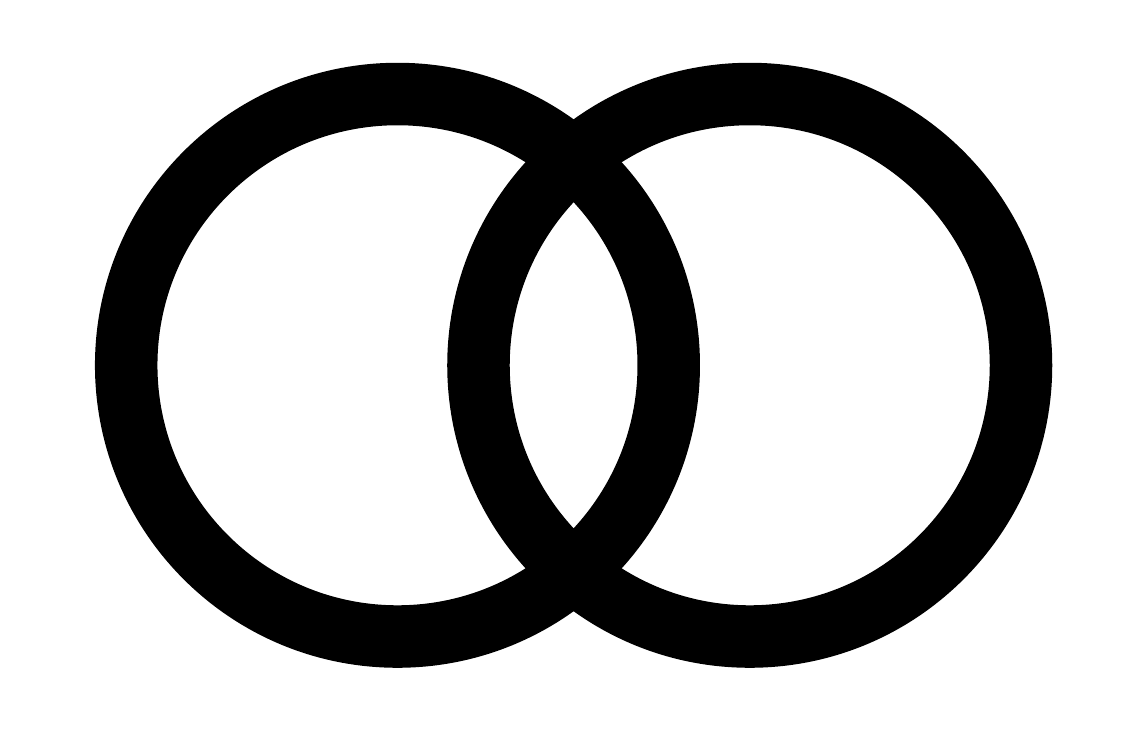}}}
\let\mgp=\marginpar \marginparwidth18mm \marginparsep1mm
\def\marginpar#1{\mgp{\raggedright\tiny #1}}
\let\lbl=\label
\def\label#1{\lbl{#1}\ifinner\else\marginpar{\ref{#1} #1}\ignorespaces\fi}
\begin{document}
\title[]{Knot Probabilities in Random Diagrams}
\author{Jason Cantarella, Harrison Chapman}
\affiliation{University of Georgia, Mathematics Department, Athens GA}
\author{Matt Mastin}
\affiliation{MailChimp, Atlanta, GA}

\begin{abstract}
We consider a natural model of random knotting-- choose a knot diagram at random from the finite set of diagrams with $n$ crossings. We tabulate diagrams with 10 and fewer crossings and classify the diagrams by knot type, allowing us to compute exact probabilities for knots in this model. As expected, most diagrams with 10 and fewer crossings are unknots (about $78\%$ of the roughly 1.6 billion 10 crossing diagrams). For these crossing numbers, the unknot fraction is mostly explained by the prevalence of ``tree-like'' diagrams which are unknots for any assignment of over/under information at crossings. The data shows a roughly linear relationship between the log of knot type probability and the log of the frequency rank of the knot type, analogous to Zipf's law for word frequency. The complete tabulation and all knot frequencies are included as supplementary data.
\end{abstract}

\keywords{random knots; random knot diagrams; immersion of circle in sphere; knot probabilities}

\maketitle

\section{Introduction}

The study of random knots goes back to the 1960's, when polymer physicists realized that the knot type of a closed (or ring) polymer would play an important role in the statistical mechanics of the polymer~\cite{Edwards:1967hd}. Orlandini and Whittington~\cite{Orlandini:2007kn} give a comprehensive survey of the development of the field in the subsequent years. Most random knot models are based on closed random walks, and there are many variants corresponding to different types of walks (lattice walks, random walks with fixed edgelengths, random walks with variable edgelengths, random walks with different types of geometric constraints such as fixed turning angles). For almost all of these models, certain phenomena have been rigorously established-- for instance, the probability of knotting goes to 1 exponentially fast as the size of the walk increases\cite{Pippenger:1989ft,Sumners:1999cd,Diao:2001db}. However, it has been difficult to prove more informative theorems.

One way to look at the problem is these knot models are all models of random space curves, and it is quite hard to relate the three-dimensional shape of a space curve to its knot type. One can in principle express the finite-type invariants as (complicated) integrals in the spirit of \cite{Lin:1994wq}, but so far computing the expected value of these integrals has been too difficult.

The point of this paper is to look at knot diagrams not as convenient combinatorial representations of space curves, but as a probability space in their own right. This is basically the same model of random knotting as \cite{Dunfield:mdWrGjny} or \cite{Diao:2005tp}: the objects are equivalence classes of immersions of $S^1$ into $S^2$ (as in Arnol'd~\cite{Arnold:1994wr}) paired with assignments of orientation and crossing signs. There are other quite different combinatorial models of random knotting in the literature-- see \cite{Cohen:2015wz}, \cite{EvenZohar:2014ws}, or \cite{Nechaev:1996gv}.

Previous authors~\cite{Diao:2005tp} have sampled a slightly different version of this space by using an algorithm of Schaeffer and Zinn-Justin~\cite{Schaeffer:2004tt} to uniformly sample a space of marked link diagrams and keeping only the knots. We wanted to get exact values for probabilities and also to study the probability of very rare knots, so a sampling approach was not appropriate for our purposes. Instead, we carried out a complete enumeration of knot diagrams up to 10 crossings using the graph theory software~\emph{plantri}~\cite{Brinkmann:2005um,Brinkmann:2007up}.

This paper is primarily computational. While we give a proof that our algorithm for enumerating diagrams is correct, our main contribution is the data set of diagrams itself. To generate the diagrams, we first enumerated the unoriented immersions of $S^1$ into $S^2$ in two different ways, as described below. We call these immersions knot shadows\footnote{Kauffman~\cite{Kauffman:1988td} calls these knot \emph{universes}.}.  The sets of shadows generated in each way were identical, and we checked the number of shadows against counts by Arnol'd for $n \leq 5$~\cite[page 79]{Arnold:1994wr}, sequence A008989 in the Online Encyclopedia of Integer Sequences~\cite{oeis}, and later enumerations of Kapoln\'ai et al.~\cite{Kapolnai:2012hs} and Coqueraux et al.~\cite{Coquereaux:2015wv}. We then enumerate assignments of crossing information and orientation for each shadow up to diffeomorphisms of $S^2$ preserving the orientation of the curve. (This is more complicated than it first seems; if the diagram has a symmetry, not all assignments of crossing information are different.) Finally, we compute the knot type for each assignment of crossing and orientation. These computations are new and fairly substantial. Enumerating the 1.6 billion 10-crossing knot diagrams and computing their HOMFLY-PT polynomial took several thousand hours of CPU time on the Amazon EC2 cloud computing service.

Figure~\ref{fig:knot frequency loglog} shows the relative frequency of all the knot types we observed, sorted by rank order among knots. When plotted on a log-log plot, we see that the plot is roughly linear across 9 orders of magnitude-- giving some evidence for a roughly Zipfian distribution of knot types. In such a distribution, the $k$-th most frequent knot type would have a probability proportional to $k^{-s}$ for some $s$. It would be very interesting to know what happens for large $n$.

How many of these diagrams represent nontrivial knots? Relatively few. Even for 10-crossing diagrams, the proportion of unknots is about $77\%$. For these crossing numbers, this is largely explained by the surprising frequency of ``tree-like'' knot shadows, studied by Aicardi~\cite{Aicardi:1994uq}, (Figure~\ref{fig:proportiongraphic}) for which the knot type does not depend on the assignment of crossing information-- the resulting knot is \emph{always} the unknot. These diagrams are surprisingly common: $42.05\%$ of 8-crossing diagrams are treelike, which explains about half of the unknots among 8-crossing diagrams ($84\%$ of 8-crossing diagrams are unknots). The remaining unknots are almost all connect sums of treelike diagrams with the unique prime 3- or 4-crossing shadow. We will show that a simple analysis (Proposition~\ref{prop:almost treelike mostly unknots}) based on such diagrams gives a lower bound of $77\%$ for the unknot fraction in 8-crossing diagrams, explaining more than $90\%$ of all unknots in this class.

Tree-like diagrams are composite diagrams where each prime summand is one-crossing knot diagram $\twistunknot$. In fact, random diagrams are in general highly composite~\cite{Chapman2015knotasymp} (Figure~\ref{fig:proportiongraphic}) with many simple summands. It remains an interesting open question to try to characterize the asymptotic distribution of sizes and numbers of connect summands in a random diagram for large $n$.

In general, counting knot diagrams is made significantly more complicated by the existence of diagram symmetries; for instance, a correction factor due to symmetries is the main technical difference between this natural model and the model of~\cite{Schaeffer:2004tt}. Our data shows that this difficulty rapidly disappears as the number of crossings increases; the average size of the automorphism group for a random 10 crossing knot diagram, for instance, is $846929/823832 \simeq 1.028$. This means that we expect this model to behave very much like the ``rooted'' model of~\cite{Schaeffer:2004tt} for crossing numbers 11 and above; one of us (Chapman) has shown this is true for sufficiently many crossings~\cite{Chapman2015knotasymp}.

\section{Definitions}

We begin with some definitions.
\begin{definition}
We define a \emph{link shadow} $L$ with $n$ vertices to be an equivalence class of generic smooth immersions of a collection of oriented circles into $S^2$ with $n$ intersections up to diffeomorphism of the sphere (which may not be orientation-preserving on either the sphere or the circles; Arnol'd~\cite{Arnold:1994wr} calls this equivalence ``OU'').
\end{definition}
Examples of link shadows are shown in Figure \ref{fig:ShadowExamples}.
\begin{figure}[htp]
	\begin{overpic}[width=4in]{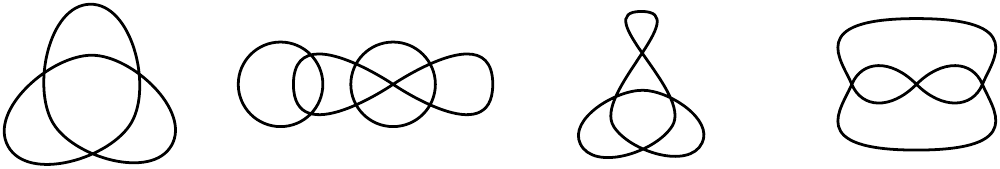}
	\end{overpic}
	\caption{\label{fig:ShadowExamples}Examples of link shadows.}
\end{figure}
In our experiments, each link shadow will be represented (non-uniquely) by a combinatorial object called a PD-code.
\begin{definition}\label{def:PD}
A \textbf{pd-code} $\mathfrak{L}$ with $n$-vertices is a list of $n$ cyclically ordered quadruples called vertices which partition the set of signed edges $\{\pm 1,\dots, \pm 2n\}$ so that in each quadruple every pair of non-adjacent edges have opposite signs.

Two pd-codes are \textbf{pd-isomorphic} if there exists a bijection of vertices and edges which respects the partition of signed edges into vertices, including globally preserving or reversing the cyclic ordering of edges around vertices. A pd-isomorphism must take both signs of an edge $\pm i$ to both signs of another edge $\pm j$, but can take $+i$ to $-j$ and vice versa.
\end{definition}
For example $\mathfrak{L}_0 = ((+4,-2,-5,+1),(+2,-6,-3,+5),(+6,-4,-1,+3))$ is a valid pd-code of three vertices and six edges, as each of $\pm 1, \dots, \pm 6$ occurs once, and each pair of non-adjacent labels in a single quadruple (such as $+4,-5$ and $-2,+1$ in the first quadruple) has opposite signs.

This definition is similar to the ``combinatorial maps'' of Walsh and Lehman~\cite{Walsh:1972ti}. Indeed, a pd-code $\mathfrak{L}$ describes an element $\sigma$ of the permutation group on the $4n$ letters $\{\pm1,\pm2,\cdots,\pm2n\}$ which is a product of $n$ disjoint cycles of size $4$, together with an implicit permutation $\tau = (+1 -1)(+2 -2)\cdots(+2n -2n)$ and a ``consistent'' choice of orientations on the edges. This (and other) formulations are explored by Coquereaux, et\ al.~\cite{Coquereaux:2015wv}.
\begin{definition}
Given a pd-code $\mathfrak{L}$ and a signed edge $e$, we can define the \emph{successor edge} $s(e)$ to be minus the edge immediately preceding $e$ in the cyclic ordering of the quadruple where $e$ occurs. The faces of a pd-code $\mathfrak{L}$ are the orbits of the successor map.
\end{definition}
In the pd-code $\mathfrak{L}_0$, for example, $-2$ occurs in the quadruple $(+4,-2,-5,+1)$, so $s(-2) = -(+4) = -4$. Similarly, $s(s(-2)) = s(-4) = -(+6) = -6$, and $s(s(s(-2)) = s(-6) = -(+2) = -2$. Thus $(-2,-4,-6)$ is a face of $\mathfrak{L}_0$.
\begin{proposition}[Mastin, \cite{Mastin:2015ii}]
The vertices, edges, and faces of $\mathfrak{L}$ form a cell-complex structure on a 2-dimensional surface $C(\mathfrak{L})$. Given two pd-codes $\mathfrak{L}$ and $\mathfrak{L'}$, the pd-codes are pd-isomorphic $\iff$ the cell-complexes $C(\mathfrak{L})$ and $C(\mathfrak{L}')$ are isomorphic (the isomorphism may reverse orientation).
\end{proposition}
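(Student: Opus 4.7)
The plan is to construct $C(\mathfrak{L})$ explicitly, verify it is a closed $2$-manifold by a standard combinatorial-map check, and then match the two equivalence relations. For the construction, take one $0$-cell per quadruple, equipped with a small oriented disk on whose boundary four labeled half-edges sit in the cyclic order of the quadruple; for each unsigned edge $i$, glue the half-edge labeled $+i$ to the half-edge labeled $-i$ to form one $1$-cell; and, letting $\sigma$ denote the vertex-rotation permutation of the $4n$ signed edges and $\tau = (+1\ {-1})(+2\ {-2})\cdots$ the endpoint involution, glue a $2$-cell along each orbit of $s = \tau \sigma^{-1}$. The non-adjacent-opposite-signs condition in \defn{PD} is exactly what is needed to make the successor permutation well-defined and to ensure that each prospective face traces a consistent cycle of corners; a standard Heffter--Edmonds argument then shows the resulting space is a closed $2$-manifold, since each $0$-cell has a disk neighborhood (the four sectors glue up), each open $1$-cell has a neighborhood which is the union of the two adjacent face-corners, and each open $2$-cell is a disk by construction.

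For the forward direction of the equivalence, a pd-isomorphism gives a bijection of quadruples and unsigned edges that respects the cyclic order of each quadruple (up to a single global reversal) and commutes with $\tau$ up to independent per-edge sign swaps---swaps which are invisible at the level of unsigned $1$-cells. Any such bijection conjugates $\sigma$ (up to global inversion) and $\tau$ into their counterparts for $\mathfrak{L}'$, hence carries $s$-orbits to $s'$-orbits, and so assembles into a cellular homeomorphism of the two surfaces, orientation-preserving or reversing according to whether the cyclic orders are preserved or reversed globally.

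For the converse, a cellular isomorphism $F\colon C(\mathfrak{L}) \to C(\mathfrak{L}')$ sends $0$-cells to $0$-cells and $1$-cells to $1$-cells, and at each vertex it transports the cyclic order of incident half-edges to the cyclic order at $F(v)$; connectivity of the surface forces this transport to be either uniformly order-preserving or uniformly order-reversing. Reading these bijections off as a bijection of quadruples and unsigned edges, with an arbitrary per-edge choice of how to match the two signed endpoints (which \defn{PD} explicitly permits), produces the required pd-isomorphism. The main obstacle is not any single argument but the careful bookkeeping: one must check that the three pieces of freedom built into pd-isomorphism---global cyclic-order reversal, arbitrary per-edge sign swaps, and the non-adjacent-opposite-signs constraint---correspond exactly to the topological freedoms of a cellular isomorphism up to a global orientation of the ambient surface, with nothing lost and no spurious constraint imposed.
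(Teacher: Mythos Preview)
The paper does not actually prove this proposition; it is quoted as a result of Mastin~\cite{Mastin:2015ii} and left without argument. So there is no in-paper proof to compare your approach against. That said, your sketch is the standard combinatorial-map construction and is essentially correct: build the surface from the rotation system $(\sigma,\tau)$, read off faces as orbits of $s=\tau\sigma^{-1}$ (which matches the paper's successor map exactly), and identify pd-isomorphism with conjugation of $(\sigma,\tau)$ by a signed-edge bijection commuting with $\tau$ and globally preserving or inverting $\sigma$.

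One small point worth tightening: your argument that ``connectivity of the surface forces this transport to be either uniformly order-preserving or uniformly order-reversing'' presupposes $C(\mathfrak{L})$ is connected. The pd-code definition in the paper does not literally require the underlying $4$-regular graph to be connected, so in principle $C(\mathfrak{L})$ could have several components, and a cellular isomorphism could preserve orientation on some and reverse it on others. Since the paper's pd-isomorphism demands a \emph{single} global choice of preserving or reversing cyclic order, there is a potential mismatch in the disconnected case. In the paper's intended application (link shadows, which are connected by definition) this never arises, but if you want the statement to hold at the stated level of generality you should either add a connectedness hypothesis or note that the equivalence is componentwise.
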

We can then define
\begin{definition}
The \emph{genus} $g$ of a pd-code $\mathfrak{L}$ is the genus of the associated cell complex $C(\mathfrak{L})$. It is given by $V - E + F = 2 - 2g$, where $V$, $E$, and $F$ count the vertices, edges, and faces of $\mathfrak{L}$. If a pd-code has genus 0, it is \emph{planar}.
\end{definition}
It is another theorem of Mastin that
\begin{proposition}[Mastin~\cite{Mastin:2015ii}]
There is a bijection between $n$-vertex link shadows and $n$-vertex \emph{planar} pd-codes up to pd-isomorphism.
\end{proposition}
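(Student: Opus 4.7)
The plan is to construct maps in both directions between the set of $n$-vertex link shadows and the set of pd-isomorphism classes of $n$-vertex planar pd-codes, and then to verify they are mutually inverse by using the previous proposition to pass between pd-isomorphism and cell-complex isomorphism.

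First I would build a map $\Phi$ from shadows to planar pd-codes. Pick a representative immersion $\iota$ of a shadow, along with an arbitrary orientation of each component, an arbitrary orientation of $S^2$, and a basepoint on each component. Traverse each component and label the $2n$ resulting arcs (the components of the curve minus the crossings) by $1,2,\dots,2n$. At each vertex, record the four incident half-edges in the cyclic order around the vertex induced by the orientation of $S^2$, writing $+i$ for the outgoing half of arc $i$ and $-i$ for the incoming half. Because the two half-edges belonging to the same strand passing through a vertex are non-adjacent in this cyclic order, and because one is the incoming and the other the outgoing half of that strand, the opposite-sign condition of \defn{PD} is satisfied. The cell complex $C(\mathfrak{L})$ built from the resulting pd-code $\mathfrak{L}$ is precisely the CW decomposition of $S^2$ cut out by $\iota$, so $\mathfrak{L}$ is planar. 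Different choices of basepoint, component orientations, and orientation of $S^2$ only relabel edges, flip signs of individual edges, or globally reverse cyclic orderings, which are exactly the operations permitted by pd-isomorphism, so $\Phi$ descends to equivalence classes.

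Next I would build a candidate inverse $\Psi$. Given a planar pd-code $\mathfrak{L}$, form $C(\mathfrak{L})$; by the previous proposition this is homeomorphic to $S^2$ with its 4-valent 1-skeleton embedded. The opposite-sign condition on non-adjacent pairs in each quadruple partitions the four half-edges at a vertex into two \emph{strands} (the non-adjacent pairs). Concatenating strand segments across vertices by matching $+i$ with $-i$ within each arc traces out a disjoint union of immersed oriented circles whose image is exactly the 1-skeleton of $C(\mathfrak{L})$, producing a link shadow. A pd-isomorphism between $\mathfrak{L}$ and $\mathfrak{L}'$ gives an isomorphism of cell complexes $C(\mathfrak{L}) \isom C(\mathfrak{L}')$, which is induced by a diffeomorphism of $S^2$ (possibly orientation-reversing, and possibly reversing circle orientations wherever a $+i \leftrightarrow -j$ swap occurs), so $\Psi$ also descends to equivalence classes. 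Checking $\Phi \circ \Psi$ and $\Psi \circ \Phi$ amounts to observing that the auxiliary choices in $\Phi$ are precisely the pd-isomorphism ambiguity, while the auxiliary choices in $\Psi$ are precisely the shadow-diffeomorphism ambiguity.

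The main obstacle will be matching these two sets of symmetries bit-for-bit. The ``OU'' equivalence on shadows allows any diffeomorphism of $S^2$ (orientation preserving or reversing) and reversal of circle orientations; pd-isomorphism allows sign flipping on individual edges, vertex and edge relabelings, and a global reversal of cyclic order. One must confirm that these really are the same symmetries after pushing through $C(\mathfrak{L})$, and in particular that the strand-recovery step in $\Psi$ is invariant under individual sign flips (which it is, since strand structure depends only on non-adjacency, not on the particular sign pattern). With these compatibilities in hand, $\Phi$ and $\Psi$ are mutually inverse and the bijection follows.
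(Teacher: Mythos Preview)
Your proposal is correct and follows exactly the construction the paper indicates. Note, however, that the paper does not actually prove this proposition: it is attributed to Mastin~\cite{Mastin:2015ii}, and the paper only sketches the forward map in one sentence (vertices $\leftrightarrow$ intersections, edges $\leftrightarrow$ arc segments, cyclic order $\leftrightarrow$ counterclockwise order at a crossing). Your argument is a fleshed-out version of precisely that sketch, together with the inverse map and the symmetry-matching needed to show well-definedness on equivalence classes; the use of the preceding cell-complex proposition to mediate between pd-isomorphism and sphere diffeomorphism is the natural and intended route.
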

The bijection itself is easy to construct: the vertices of the pd-code represent intersections of the circles, the edges represent the sections of circles between intersections (signs represent orientation along the circle), and the cyclic ordering of the vertices is the counterclockwise order of circle arcs at each intersection point. An example of a link shadow and the associated pd-code is shown in Figure~\ref{fig:PD}.
\begin{figure}[H]
\begin{center}
\hphantom{.}
\hfill
\raisebox{-0.5\height}{
\begin{overpic}[height=2cm]{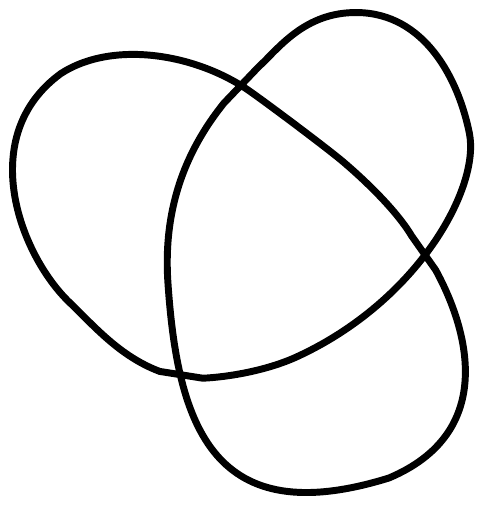}
\put(90,90){$1$}
\put(40,50){$2$}
\put(90,0){$3$}
\put(60,55){$4$}
\put(5,90){$5$}
\put(55,35){$6$}
\end{overpic}}
\hfill
$\{[+1,-5,-2,+4],[+2,+5,-3,-6],[-1,-4,+6,+3]\}$
\hfill
\hphantom{.}
\end{center}
\caption{\label{fig:PD} A three-crossing shadow and its pd-code. There is only one component. Note that we may omit directional arrows as the orientation can be inferred from the ordering of the edge labels.}
\end{figure}
It is clear that there are finitely many $n$-vertex planar pd-codes, and so finitely many equivalence classes of pd-codes up to isomorphism. By the proposition, there are therefore finitely many $n$-vertex link shadows. This will be our initial probability space.

Given a link shadow $\mathfrak{L}$ we can define a link diagram by
\begin{definition}
A \emph{link diagram} is a link shadow where each intersection is decorated with over-under information for the circles meeting at the intersection. We call these intersections \emph{crossings}. The equivalence relation for diagrams is the \emph{diagram isomorphism}: a diffeomorphism (not necessarily orientation-preserving) of $S^2$ to $S^2$ which respects over-under information and preserves the orientation of the curves.
\end{definition}
It is clear that there are at most $2^{\text{\# crossings}}$ link diagrams associated to a given link shadow, but that this number will be smaller if there are nontrivial shadow automorphisms.
\begin{definition}
In the \emph{random diagram model}, a random $n$-crossing knot is selected uniformly from the counting measure on the finite set of one-component $n$-crossing link diagrams.
\end{definition}
This is the smallest probability space including all the knot diagrams that one can define; however, note that we could expand the probability space by doing things like choosing planar, rather than spherical embeddings (hence labeling a face as the exterior) or choosing a basepoint on each component. These amount to making the rules for diagram and shadow isomorphism more strict, increasing the number of equivalence classes of diagrams.

\section{Enumerating shadows}

Our first goal is to enumerate the link shadows. This section describes our two enumeration algorithms. Each is built on the same computational foundation; a library which allows us to manipulate pd-codes. In this library, we provide
\begin{definition}
A \emph{pdstor} is an ordered collection of pd-isomorphism types of pd-codes. We define the operation of adding a pd-code to a pdstor as adding the pd-code if it does not belong to an existing pd-isomorphism type (that is, is not pd-isomorphic to a pd-code present in the pdstor) and ignoring the pd-code otherwise.
\label{def:pdstor}
\end{definition}
Our implementation uses a combination of hashing and brute-force comparison to check whether a given pd-code is already pd-isomorphic to something in the database. Details of the pd-isomorphism check are provided in Appendix A for the curious reader.

We are left with the problem of creating a set of input pd-codes to the pdstor which are guaranteed to cover all $n$-vertex link shadows. We will then separate out the one-component knot shadows by searching the pdstor.

\subsection{Dual quadrangulations and connect sums}
\label{subsec:connect sum}

Brinkmann et al.~\cite{Brinkmann:2007up} provide an algorithm to enumerate all of the simple embedded planar quadrangulations of $S^2$, up to embedded isomorphism. A quadrangulation is a planar graph where each face has four edges, as in Figure \ref{fig:QuadExamples}.
\begin{figure}[H]
	\begin{center}
	\begin{overpic}[width=3in]{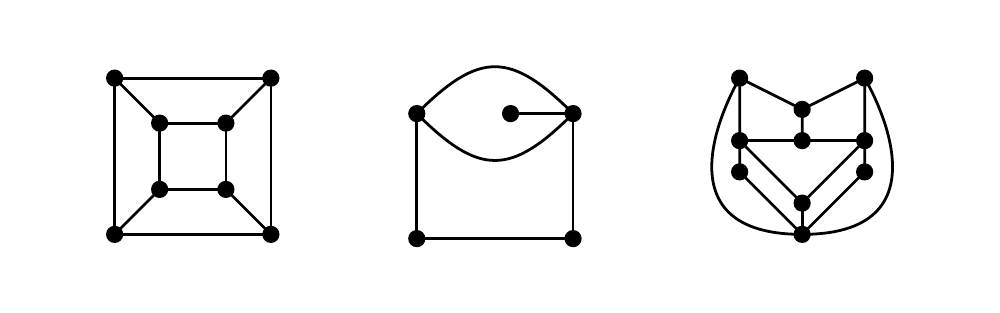}
	\end{overpic}
	\end{center}
	\caption{\label{fig:QuadExamples} This figure shows examples of quadrangulations.}
\end{figure}
The dual graph to a quadrangulation is a connected 4-regular embedded planar multigraph (graphs embedded on a surface sometimes called ``maps''), as shown at left in Figure~\ref{fig:NonSimpleQuad}. In other words, the dual graph defines a link shadow. This is almost enough to enumerate shadows. However, not every link shadow is obtained in this way: if the quadrangulation is simple, no pair of faces in the link shadow share more than one edge. As we can see in Figure \ref{fig:NonSimpleQuad}, this property is not true for every link shadow.
\begin{figure}[H]
\hphantom{.}
\hfill
\includegraphics[height=1in]{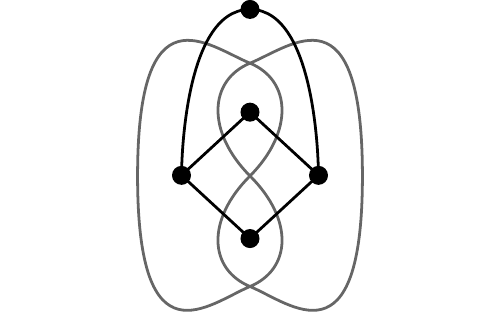} \hfill
\includegraphics[height=1in]{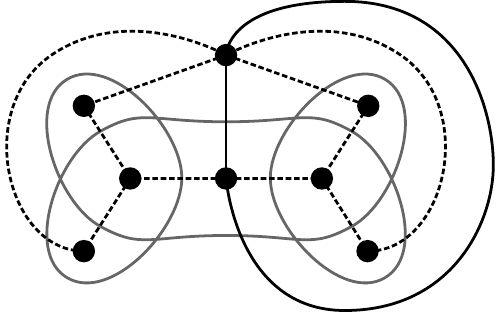}
\hfill
\hphantom{.}
\caption{\label{fig:NonSimpleQuad} This figure shows examples of quadrangulations and their dual graphs, which are link shadows. The quadrangulation at left is simple; the corresponding shadow is prime. The quadrangulation at right is non-simple; the corresponding shadow is composite.}
\end{figure}
We need a familiar idea in a slightly new guise: prime and composite shadows.
\begin{definition}
Every pair of faces in a \emph{prime shadow} shares at most one edge; all other link shadows are \emph{composite shadows}.
\end{definition}
This is not the same thing as prime and composite links-- we can assign crossings and get a prime diagram of a composite link or a composite diagram of a prime link-- but the theory is quite similar. The biggest difference, as we will see below, is that while connect sum for knot types is associative, connect sum for knot shadows is not.

\begin{definition}
Given edges $e$ and $e'$ in pd-codes $\mathfrak{L}$ and $\mathfrak{L}'$, we can construct a new pd-code $\mathfrak{L} \#_{e,e'} \mathfrak{L'}$. The edges of $\mathfrak{L} \#_{e,e'} \mathfrak{L'}$ are the edges of $\mathfrak{L}$ together with the edges of $\mathfrak{L'}$. The vertices of $\mathfrak{L} \#_{e,e'} \mathfrak{L'}$ are the vertices of $\mathfrak{L}$ together with the vertices of $\mathfrak{L'}$ with one change: $+e$ and $+e'$ are swapped ($-e$ and $-e'$ stay in their original positions).
\end{definition}
The effect of the definition is to switch the positions of the heads of the edges $e$ and $e'$ in the crossings where they occur. We are not trying to be too specific about notation because the edges will need to be relabeled in the new pd-code (see our code for one possible implementation).
We can then prove
\begin{proposition}
If $\mathfrak{L}$ and $\mathfrak{L'}$ are planar, then so is the pd-code $\mathfrak{L} \#_{e,e'} \mathfrak{L'}$.
\end{proposition}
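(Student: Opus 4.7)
The plan is to verify Euler's formula for the cell complex $C(\mathfrak{L} \#_{e,e'} \mathfrak{L}')$ and conclude that its genus is zero. Because the connect sum operation, by definition, neither adds nor deletes vertices or edges but merely relabels two entries of the pd-codes, one has $V = V_1 + V_2$ and $E = E_1 + E_2$ immediately. Combined with the planarity hypothesis $V_i - E_i + F_i = 2$ for each $i$, it therefore suffices to prove that $F = F_1 + F_2 - 2$; that is, the connect sum decreases the total face count by exactly two.

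To count faces I would analyze how the swap of $+e$ and $+e'$ perturbs the successor map $s$. Since $s(x)$ depends only on the cyclic order at $x$'s vertex, and the swap alters this cyclic order at exactly two vertices (the former vertices of $+e$ and $+e'$), the value of $s$ is modified on only four signed edges: $+e$, $+e'$, and the cyclic successors of the swapped entries at those two vertices. I would then directly trace the orbits of the modified $s$ starting at $+e$ and at $-e$. A short computation shows that the orbit of $+e$ runs out of $\mathfrak{L}$ into the old face of $\mathfrak{L}'$ containing $+e'$ before returning, so the face of $\mathfrak{L}$ containing $+e$ and the face of $\mathfrak{L}'$ containing $+e'$ merge into a single new face; an entirely parallel computation shows that the faces of $\mathfrak{L}$ and $\mathfrak{L}'$ on the other side of $e$ and $e'$ merge. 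Every other orbit is preserved, giving $F = F_1 + F_2 - 2$ and hence $V - E + F = 2$, so the genus is $0$.

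The hard part is handling degenerate configurations. If $e$ is a loop in $\mathfrak{L}$, or if the two faces of $\mathfrak{L}$ incident to $e$ already coincide (and similarly for $e'$ in $\mathfrak{L}'$), then the naive ``two pairs of faces merge'' accounting must be re-examined: two of the four relevant orbits may coincide, and a merger could either collapse or split in unexpected ways. These cases can be dispatched by a careful case check of the modified-successor perturbations, or sidestepped entirely by a topological argument: the combinatorial operation corresponds to removing from each of the spheres $C(\mathfrak{L})$ and $C(\mathfrak{L}')$ a small open disk that meets the underlying complex transversally only in the edge $e$ (respectively $e'$) in two points, and then gluing the two resulting disks along their boundary circles in the pattern prescribed by the swap. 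Since gluing two disks along their boundaries always yields $S^2$, the resulting cell complex is a sphere, so $\mathfrak{L} \#_{e,e'} \mathfrak{L}'$ is planar.
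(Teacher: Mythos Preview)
Your proposal is correct and follows essentially the same route as the paper: count vertices and edges additively, then trace the modified successor map to show that exactly two pairs of faces merge, yielding $F = F_1 + F_2 - 2$ and hence Euler characteristic $2$. The paper's proof is terser and does not explicitly address the degenerate configurations you flag (loop edges, or the two sides of $e$ lying on the same face), so your extra case check and the alternative disk-gluing argument are a genuine improvement in rigor rather than a different method.
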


\begin{proof} The new collection of vertices clearly still partitions the new collection of signed edge labels, since the same edge labels occur in vertices as in the original pd-codes $\mathfrak{L}$ and $\mathfrak{L'}$. We are swapping a pair of $+$ labels, so the rule on non-adjacent signs is still obeyed in the new pd-code. This means that we need only check the genus.

If we think of faces of a pd-code as lists of signed edges, the effect of the connect sum operation is to concatenate two pairs of these lists. As Figure~\ref{fig:connect sum} shows, the successor of $+e$ in $\mathfrak{L}$ is replaced by the successor of $+e'$ in $\mathfrak{L'}$, and the chain of successors continues as before until returning eventually to the successor of $+e$ in $\mathfrak{L}$. This creates a new face in $\mathfrak{L} \#_{e,e'} \mathfrak{L'}$ which is formed by merging two previous faces in $\mathfrak{L}$ and $\mathfrak{L'}$ and contains $+e$ and $+e'$. We can argue similarly that a second new face of $\mathfrak{L} \#_{e,e'} \mathfrak{L'}$ is also created, containing $-e$ and $-e'$.

\begin{figure}[H]
\hphantom{.}
\centering
\begin{overpic}[height=1in]{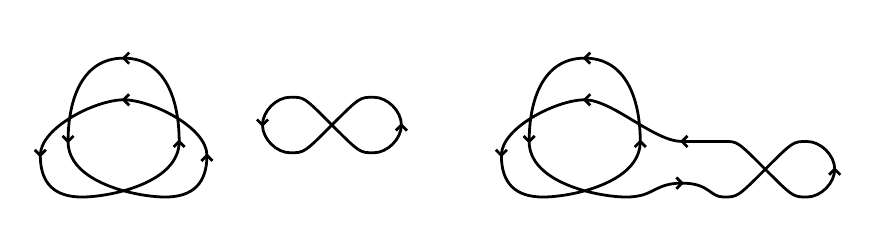}
  \put(20,4){$e_1$}
  \put(26,13){$\#$}
  \put(33,18.5){$e_2$}
  \put(51,13){$=$}
  \put(80,14){$e_1'$}
  \put(83,3){$e_2'$}
\end{overpic}

\caption{\label{fig:connect sum} The connected sum of two link shadows yields a new shadow where two pairs of faces have been merged.}
\end{figure}
Since the number of vertices and edges of $\mathfrak{L} \#_{e,e'} \mathfrak{L'}$ is the sum of the vertices and edges in $\mathfrak{L}$ and $\mathfrak{L'}$ and the number of faces is 2 less than the sum of the number of faces in $\mathfrak{L}$ and $\mathfrak{L'}$, $V - E + F = 2$ for $\mathfrak{L} \#_{e,e'} \mathfrak{L'}$, so the connect sum pd-code is planar as desired.
\end{proof}

We now give a theorem corresponding to the prime decomposition of links~\cite{MR0102821}:
\begin{proposition}
Every composite link shadow can be created by connect sum operations on a well-defined set of prime link shadows called the prime factors of the composite shadow.
\label{prop:prime decomposition}
\end{proposition}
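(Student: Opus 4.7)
The plan is to prove both existence and uniqueness by induction on the crossing number, using an \emph{inverse connect sum} operation as the main tool.

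For existence, suppose $\mathfrak{L}$ is composite with $n$ crossings. Then there exist distinct faces $f, f'$ sharing two distinct edges $e_1$ and $e_2$, and these shared edges together with the crossings at their endpoints determine a simple closed curve in $C(\mathfrak{L})$ whose image in $S^2$ bounds two disks, each containing a subset of the crossings of $\mathfrak{L}$. I would define an inverse-connect-sum that mirrors the forward definition: swap the positions of $+e_1$ and $+e_2$ in the vertex tuples where they occur, leaving $-e_1$ and $-e_2$ fixed. An Euler-characteristic bookkeeping argument parallel to the one in the preceding proposition shows that this operation splits the pd-code into two planar components $\mathfrak{L}_1$ and $\mathfrak{L}_2$ such that $\mathfrak{L} = \mathfrak{L}_1 \#_{e_1, e_2} \mathfrak{L}_2$ after relabeling; the two new faces created on splitting correspond to the two faces $f, f'$ that were merged by the forward operation. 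Provided each $\mathfrak{L}_i$ has strictly fewer crossings than $\mathfrak{L}$, the inductive hypothesis produces a prime decomposition for each, and the concatenation gives one for $\mathfrak{L}$.

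For uniqueness, I would adapt the Haken-style argument used for prime decomposition of knots. Given two prime decompositions of $\mathfrak{L}$, consider the two collections of separating 2-cycles in $C(\mathfrak{L})$ realizing the splittings. I would show that any two such separating 2-cycles can, up to a sphere diffeomorphism preserving $\mathfrak{L}$, be taken to be either disjoint or nested, so that the splits at distinct locations commute as combinatorial operations on the pd-code. A standard exchange argument then shows that any two decompositions yield the same unordered multiset of prime factors.

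The main obstacles are twofold. First, the inverse-connect-sum step must be checked in degenerate cases: one must rule out that one of the factors $\mathfrak{L}_i$ is trivial (the empty shadow), which corresponds to configurations where $f$ or $f'$ is a bigon entirely enclosed on one side of the separating curve, and one must handle the case where the same pair $f, f'$ shares more than two edges. Second, because (as the text emphasizes) connect sum of shadows is not associative, ``well-defined'' in the statement must be interpreted as uniqueness of the multiset of prime factors rather than of any particular parenthesization of connect-sum operations; verifying precisely this form of uniqueness—so that non-associativity manifests only as a permutation of factors, not as the production of genuinely different primes—is the most delicate part of the argument and is where the bulk of the work in the uniqueness step will go.
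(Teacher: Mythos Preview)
Your existence argument is essentially the paper's, though you induct on crossing number while the paper inducts on the total number of pairs of edges shared between two faces; both work. The paper sidesteps the two obstacles you flag (a trivial factor; faces sharing more than two edges) by always choosing an \emph{adjacent} pair among the shared edges to splice: since the shared edges between two faces must appear in exactly opposite cyclic order on the two faces by planarity, any adjacent pair gives a clean separation with at least one crossing on each side, and the induction variable strictly drops. Adopting this one refinement would dissolve most of the case analysis you anticipate.

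Where your proposal genuinely diverges is in uniqueness. You reach for a Haken-style argument---put the separating $2$-cycles from two decompositions into disjoint or nested position, then run an exchange argument---and this can certainly be made to work, but it is heavier than necessary. The paper simply observes that any two cut-and-splice operations are supported on disjoint pairs of edges and therefore commute as combinatorial operations on the pd-code: performing them in either order yields the same collection of subdiagrams (this is essentially a diamond-lemma/confluence observation). That immediately gives independence of the multiset of prime factors from all choices, with no need to isotope curves on the sphere or invoke $3$-manifold machinery. Your approach buys generality that is not needed here; the paper's buys a one-paragraph proof. Your reading of ``well-defined'' as uniqueness of the multiset of prime factors (not of a parenthesization) matches the paper's intent.
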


\begin{proof}
We proceed by induction on the number of pairs of edges shared by two faces. If no two faces share more than one edge, the number of pairs is zero and the diagram is already prime. This is the base case.

Observe that as in Figure~\ref{fig:undo connect sum}, if some pair of faces shares some number of edges, the shared edges occur in precisely opposite order on both faces (if not, the diagram isn't planar). This means that our diagram is the connect sum of two subdiagrams created by cutting and splicing a pair of adjacent shared edges, as on the right side of that Figure.
\begin{figure}[H]
\hphantom{.}
\hfill
\includegraphics[height=1in]{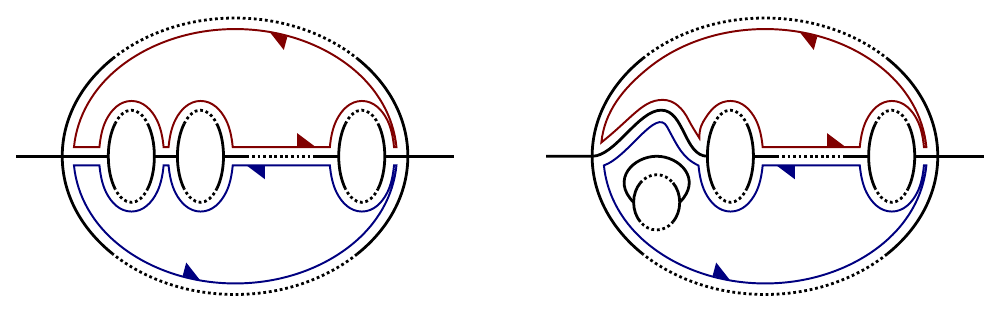}\hfill
\hphantom{.}
\caption{\label{fig:undo connect sum} Two faces which share more than one edge must create an opportunity for a cut and splice.}
\end{figure}

Since each subdiagram has fewer pairs of edges shared by two faces, by induction we may write them as connect sums of collections of prime diagrams. But we made arbitrary choices when deciding which pair of adjacent edges to cut-and-splice, and so must check that the collection of prime diagrams did not depend on these choices.

It's enough to show that the collection of subdiagrams produced by doing any two such cut-and-splices in one order is the same as the collection of subdiagrams produced by doing them in the other. But this is clear as the operations don't interfere with one another (Figure~\ref{fig:splices commute} illustrates the point.)\end{proof}

\begin{figure}[H]
\hphantom{.}
\hfill
\begin{overpic}[height=1in]{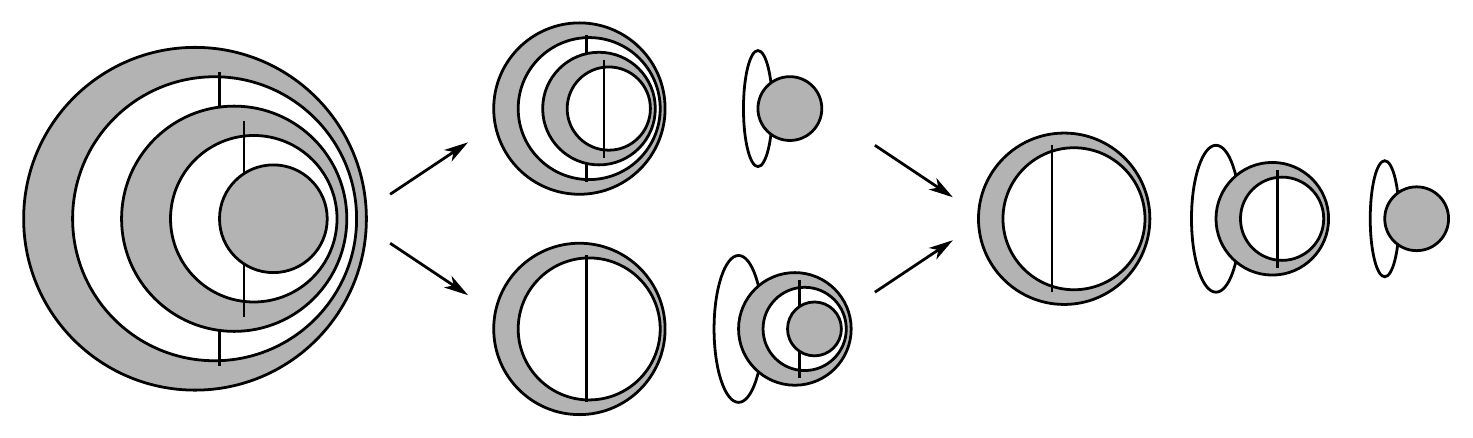}
  \put(2.5,14){$A$}
  \put(9,14){$B$}
  \put(16,14){$C$}
  \put(8,0){$A\#B\#C$}
  \put(37,30){$A\#B$}
  \put(38.5,-1){$A$}
  \put(52,30){$C$}
  \put(50,-1){$B\#C$}
  \put(72,6){$A$}
  \put(83,6){$B$}
  \put(94,6){$C$}
\end{overpic}
\hfill
\hphantom{.}
\caption{\label{fig:splices commute} Cutting and splicing at two locations can be done in either order and produce the same list of subdiagrams.}
\end{figure}

We now know that every $n$ vertex composite shadow $L$ is the connect sum of prime shadows $L_1, \dots, L_k$ and that
the numbers of vertices $V_1, \dots, V_k$ form an integer partition of $n$. Therefore, we need to enumerate ``all connect sums of all link shadows with numbers of vertices that partition $n$''. This is not as trivial as it sounds. Writing the connect sum as $L_1 \# \dots \# L_k$ is dangerously misleading-- though connect sum is associative (and even commutative) on isotopy classes of knots, the same is not true for links and for diagrams. Figure~\ref{fig:not associative} gives an example of a  link shadow $L$ which is a connect sum of diagrams $L_1$, $L_2$ and $L_3$, which \emph{can} be written $(L_1 \# L_2) \# L_3$ but \emph{can't} be written $L_1 \# (L_2 \# L_3)$.
\begin{figure}[H]
\hphantom{.}
\hfill
\begin{overpic}{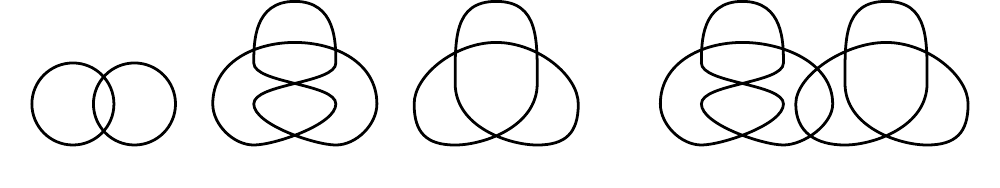}
\put(8.5,0){$L_1$}
\put(28,0){$L_2$}
\put(48,0){$L_3$}
\put(80,0){$L$}
\end{overpic}
\hfill
\hphantom{.}
\caption{\label{fig:not associative} $L$ can be written as $L = (L_1 \# L_2) \# L_3$ but not as $L_1 \# (L_2 \# L_3)$.}
\end{figure}
It is easy to see the following.

\begin{lemma}
The set of prime summands $\{L_1, \dots, L_k\}$ of a composite link shadow $L$ can be renumbered so that $L = (((L_1 \# L_2) \# L_3) \# \cdots L_k)$ and $L_1$ has the largest number of vertices among all the $L_1, \dots, L_k$. The numbers of vertices $V_2, \dots, V_k$ in the other prime summands are \emph{not} guaranteed to be in sorted order.
\end{lemma}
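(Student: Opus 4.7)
The plan is to induct on the number of prime summands $k$. The base case $k = 1$ is immediate, so assume $k \geq 2$ and that the claim holds for composite shadows with fewer prime summands. The key auxiliary structure is what I will call the \emph{decomposition tree} $T(L)$: its nodes are the prime summands $L_1, \ldots, L_k$ of $L$, and its edges correspond to the cut-and-splice operations used to separate them in the proof of \prop{prime decomposition}. That this is actually a well-defined tree is essentially what \prop{prime decomposition} establishes: the cut-and-splice operations commute, they do not interfere with one another, and each one disconnects the current diagram into two strictly smaller pieces, so the adjacency pattern among the primes is acyclic.

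Since $T(L)$ has $k \geq 2$ nodes, it has at least two leaves, so I can pick a leaf and call it $L_k$, choosing one different from $L_1$. Let $L^\ast$ be the composite shadow obtained by undoing the unique cut-and-splice operation incident to $L_k$ in $T(L)$; then $L = L^\ast \#_{e,e'} L_k$ for the pair of edges $e, e'$ involved in that operation, and $L^\ast$ has prime summands $\{L_1, \ldots, L_{k-1}\}$ with decomposition tree equal to $T(L)$ with the leaf $L_k$ removed. Because $L_1$ still has the largest vertex count among these $k-1$ primes, the inductive hypothesis applies to $L^\ast$; after renumbering we may write $L^\ast = (((L_1 \# L_2) \# L_3) \# \cdots \# L_{k-1})$, and appending $L_k$ yields the desired left-associated form.

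I expect the main technical obstacle to be confirming that $T(L)$ is genuinely a well-defined tree, i.e.\ that deleting a leaf of $T(L)$ corresponds to undoing a legitimate connect sum operation at a well-defined pair of edges. This requires unpacking the cut-and-splice argument of \prop{prime decomposition} slightly further, to track not just which prime summands appear but also which pairs of summands lie on opposite sides of each cut. Once that is established, the lemma's caveat that $V_2, \ldots, V_k$ need not be sorted falls out automatically: the leaf-removal order on $T(L)$ is dictated purely by its tree structure, with no reason to respect vertex counts of the primes, and examples like \figr{not associative} show that tree shapes forcing a non-sorted ordering genuinely arise.
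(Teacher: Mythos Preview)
Your proof is correct and rests on the same underlying idea as the paper's: the prime summands are connected in a tree-like pattern, so one may start the left-associated build at any chosen summand (in particular, one of maximal size). The paper's version is a two-sentence appeal to connectedness (``start connect-summing at any prime summand and build the rest from there''), whereas you make the decomposition tree explicit and run a leaf-removal induction; your version is more careful but not genuinely different in approach.
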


\begin{proof}
Since the overall shadow is connected, we can start connect-summing at any prime summand and build the rest of the shadow from there by undoing the cut-and-splice operations of Proposition~\ref{prop:prime decomposition}. Therefore, we are free to choose $L_1$ to have a maximal number of vertices.
\end{proof}

\begin{lemma}
The set $\{L_1, \dots, L_k\}$ of the prime summands of $L$ is a shadow-isomorphism invariant of $L$. In particular, the set of numbers of vertices $\{V_1, \dots, V_k\}$ is a shadow isomorphism invariant of $L$.
\label{lem:prime summands invariant}
\end{lemma}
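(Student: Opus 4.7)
The plan is to piggyback on Proposition~\ref{prop:prime decomposition}, where the key content—that the collection of prime factors obtained by repeated cut-and-splices does not depend on the choices made—has already been established. It then suffices to show that a shadow isomorphism transports a decomposition of $L$ to a decomposition of $L'$ of the same type. Concretely, if $\phi\colon L\to L'$ is a shadow isomorphism, then $\phi$ preserves the cell-complex structure on $S^2$ (vertices, edges, faces, and the cyclic order of edges at each vertex), and therefore induces a bijection on faces that preserves the ``share $k$ common edges'' relation for every $k$. In particular $\phi$ takes pairs of faces of $L$ sharing more than one edge to pairs of faces of $L'$ with the same property.

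I would then induct on the number $m$ of unordered pairs of faces of $L$ sharing more than one edge. The base case $m=0$ says $L$ (and, by the observation above, $L'$) is prime, so the decompositions are $\{L\}$ and $\{L'\}$, which agree as shadow-isomorphism classes because $\phi$ itself is an isomorphism. For the inductive step, pick a pair of faces $(F_1,F_2)$ of $L$ sharing adjacent common edges $(e,e')$ and apply the cut-and-splice of Proposition~\ref{prop:prime decomposition} to write $L = A\mathbin{\#} B$. The pair $(\phi(F_1),\phi(F_2))$ in $L'$ shares the adjacent edges $(\phi(e),\phi(e'))$, and splicing there produces $L' = A'\mathbin{\#} B'$ with $\phi$ restricting to shadow isomorphisms $A\cong A'$ and $B\cong B'$. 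Since $A$ and $B$ each have strictly fewer pairs of faces sharing multiple edges than $L$, the inductive hypothesis gives that the multisets of prime summands of $A,A'$ and of $B,B'$ coincide; taking the union yields the same for $L$ and $L'$.

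The main obstacle is the small but somewhat fiddly bookkeeping check that $\phi$ really does descend to well-defined shadow isomorphisms of the two factors obtained from the splice. The essential observation is that $\phi$ commutes with the successor map, so the two new faces created by the splice on $L$ map under $\phi$ to the two new faces created by the corresponding splice on $L'$, while the rest of the face, vertex, and edge structure of $A$ and $B$ is inherited unchanged from $L$. The well-definedness of the full decomposition procedure in Proposition~\ref{prop:prime decomposition} then guarantees that the initial choice of splicing pair $(F_1,F_2)$ does not affect the final multiset, completing the argument. The statement about the numbers of vertices $\{V_1,\dots,V_k\}$ follows immediately, since $V_i$ depends only on the shadow-isomorphism class of $L_i$.
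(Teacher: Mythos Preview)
Your proposal is correct and follows the same underlying approach as the paper, namely leaning on the well-definedness established in Proposition~\ref{prop:prime decomposition}. The paper's own proof is a single sentence---``This follows directly from that the set $\{L_1,\dots,L_k\}$ is well-defined''---which implicitly uses that link shadows are already equivalence classes under shadow isomorphism, so a quantity shown to be well-defined at that level is automatically an isomorphism invariant; you have simply unpacked this into an explicit induction tracking how an isomorphism $\phi$ transports one splice decomposition to another.
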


\begin{proof} This follows directly from that the set $\{L_1, \dots, L_k\}$ is well-defined. \end{proof}

Now recall that a pdstor (Definition~\ref{def:pdstor}) is an ordered collection of canonical pd-codes which represent pd-isomorphism classes. We can define connect sum for pdstors by saying that $P_1 \# P_2$ is the pdstor constructed by adding all $\mathfrak{L_1} \#_{e1,e2} \mathfrak{L_2}$ where $e_i$ is an edge of some $\mathfrak{L_i} \in P_i$. We now generate composite link shadows iteratively:

\begin{algorithmic}
\Procedure{BuildCompositeShadows}{$n$}
\Comment{Build all link shadows with $\leq n$ vertices}

\State Use \emph{plantri} to build pdstors $P_1, \dots, P_n$ containing all oriented prime link shadows with $1$ to $n$ vertices.

\ForAll{$1 \leq k \leq n$}

 \State Define an empty pdstor $P_k$.
 \ForAll{partially sorted partitions $k_1k_2 \dots k_\ell$ of $k$ with $k_1 \geq k_i$}
    \State  Build the pdstor $P_{k_1k_2\cdots k_\ell} = ((P_{k_1} \# P_{k_2}) \# P_{k_3}) \# \cdots P_{k_\ell}$.
    \State  (If $((P_{k_1} \# P_{k_2}) \# P_{k_3}) \# \cdots P_{k_{\ell-1}}$ was already computed, we can reuse it.)
 \EndFor

 \ForAll{\emph{fully} sorted partitions $k_1k_2 \dots k_\ell$ of $k$ with $k_1 \geq k_2 \geq \dots \geq k_\ell$}
    \State Define an empty pdstor $P$.
    \ForAll{\emph{partially} sorted partitions $k'_1k'_2\cdots k'_\ell$ with the same \emph{set} of $k_i$}
       \State Add all elements of $P_{k'_1k'_2\cdots k'_\ell}$ to $P$.
      \State (Some of these will be isomorphic to one another.)
    \EndFor
    \State Add all elements of $P$ to $P_k$.
    \State (By Lemma~\ref{lem:prime summands invariant}, these aren't isomorphic to anything previously computed,
    \State so we don't need to check for isomorphism with existing elements of $P_k$.)
 \EndFor

\EndFor
\EndProcedure
\end{algorithmic}

\subsection{Expansions of embedded planar simple graphs}

Our next strategy will be much more complicated (and somewhat slower to run), but it serves as crucial check on the previous computation. The basic idea is to define a smaller class of graphs so that the graphs we are interested in can be obtained from the base class of graphs by various expansion moves. Lehel~\cite{JGT:JGT3190050412} gave a strategy for generating all 4-regular graphs in this way from the octahedral graph. Instead of using Lehel's strategy directly, we build on the method of Brinkmann and McKay~\cite{Brinkmann:2007up,McKay:1998wa} for enumerating isomorph-free embedded planar graphs; we extend their work here to generate the class of graphs that we're interested in.

We observed above that the link shadows are embedded isomorphism classes of $4$-regular embedded planar multigraphs. We now define four expansion moves of embedded planar graphs with vertex degree $\leq 4$ which generate embedded planar multigraphs of vertex degree $\leq 4$ with the same number of vertices, but additional edges:
\begin{definition}
The four expansion operations that we will use are the following:
\begin{itemize}
\item $\loopinsert$ loop insertion adds a loop edge to a vertex of degree 1 or 2, as below.  Loop insertion can be performed on each side of a vertex of degree 2.
  \begin{figure}[H]
      \begin{center}
      \includegraphics[height=0.75in]{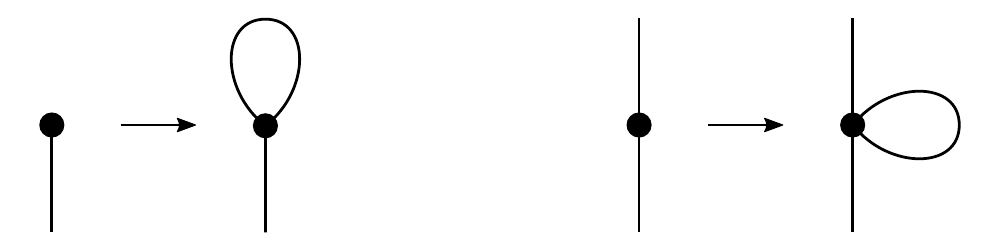}
      \end{center}
  \caption{Adding a loop edge to a vertex of degree 1 or 2.}
  \label{fig:e1 loop insertion}
  \end{figure}
\item $\edgedouble$ reversing edge doubling duplicates an existing edge joining vertices of degree $< 4$ so as to create a new bigon face. Note that the (counterclockwise) order of the two vertices is reversed on the two vertices.
  \begin{figure}[H]
    \begin{center}
    	   \includegraphics[height=0.75in]{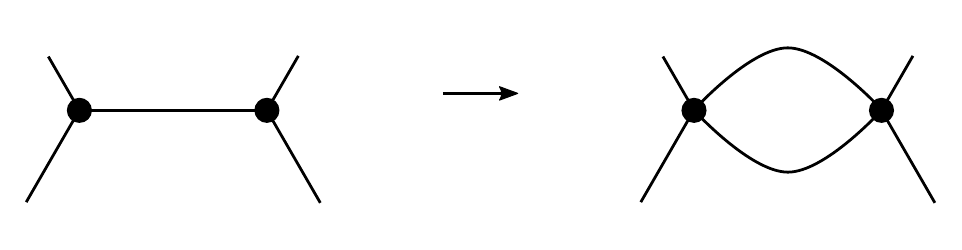}
	\end{center}
	\caption{Doubling an edge joining two vertices of degree $<4$.}
	\label{fig:e2 reversing edge doubling}
   \end{figure}
\item $\cutedgedouble$ preserving doubling also duplicates an existing edge joining vertices of degree $<4$, but
keeps the counterclockwise order of the edges the same on each at each of the two vertices. This sort of doubling is only available if the original edge is a cut edge of the graph.
\begin{figure}[H]
  \begin{center}
    \includegraphics[height=0.75in]{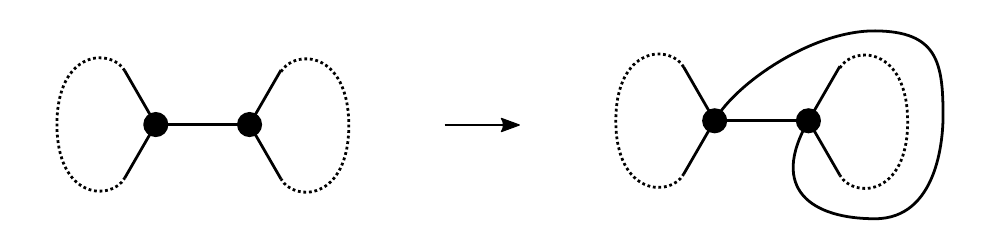}
  \end{center}
  \caption{Doubling a cut edge joining two vertices of degree $<4$ can be done another way.}
\end{figure}
\item $\pairinsert$ pair insertion adds a pair of edges simultaneously, joining two vertices of degree 2 which are both on two faces of the embedding, as below.
  \begin{figure}[H]
  \begin{center}
    \includegraphics[height=0.75in]{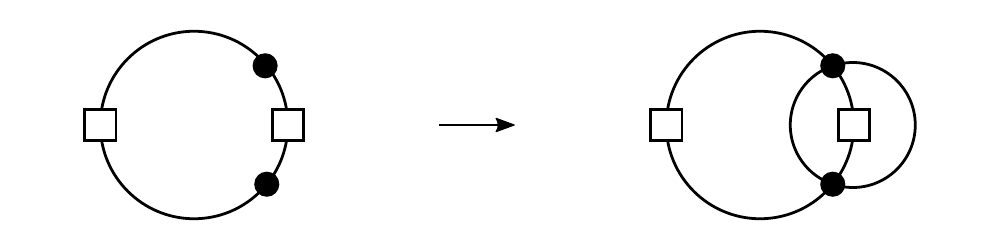}
  \end{center}
  \caption{Adding two new edges to join vertices of degree 2 on the same face.}
  \end{figure}
\end{itemize}
\end{definition}

We can now show

\begin{proposition}
Every link shadow $L$ can be obtained from a connected, embedded planar simple graph of vertex degree $\leq 4$ $G_0$ by a series of $\loopinsert$, $\edgedouble$, $\cutedgedouble$, and $\pairinsert$ expansions.

Equivalently, any link shadow $L$ can be reduced to a connected embedded planar simple graph $L_0$ of vertex degree $\leq 4$ by a series of $\loopinsert$, $\edgedouble$, $\cutedgedouble$, and $\pairinsert$ reductions. The embedded isomorphism type of $L_0$ is determined uniquely by the (unoriented) shadow isomorphism type of $L$ (the order in which the reductions are performed doesn't matter).
\label{prop:reduce}
\end{proposition}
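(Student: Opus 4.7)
The plan is to prove the reduction form of the statement: from every link shadow $L$ (a 4-regular embedded planar multigraph on $S^2$) one can reach a connected embedded planar simple graph $L_0$ of vertex degree $\leq 4$ by a sequence of reverse-$\loopinsert, \edgedouble, \cutedgedouble, \pairinsert$ moves, and the resulting $L_0$ is unique up to embedded isomorphism. I would argue by induction on the number of edges, in three parts: (i) existence of a reduction when the graph is not simple, (ii) invariance of the class of connected embedded planar multigraphs of vertex degree $\leq 4$ under reductions, and (iii) confluence, giving uniqueness of $L_0$.

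For (i), if there is a loop at a vertex $v$, deleting it is a valid $\loopinsert^{-1}$ reduction: the loop contributes $2$ to $\deg(v) \leq 4$, so afterwards $v$ has degree $\leq 2$, matching the hypothesis of $\loopinsert$. Otherwise, choose a pair of parallel edges $e, e'$ between vertices $u, v$ whose enclosed open disk $D \subset S^2$ is minimal under inclusion. If $D$ contains no graph structure, then $e \cup e'$ bounds a bigon face and $\edgedouble^{-1}$ applies. If $D$ is nonempty but removing one of $e, e'$ leaves the other as a cut edge, apply $\cutedgedouble^{-1}$. Otherwise, the portion of $L$ inside $\overline{D}$ is a connected planar subgraph attached to $\partial D$ only at $u$ and $v$; a parity-of-degrees count on the interior subgraph forces the existence of two degree-$2$ vertices on a common interior face, and the outermost such pair is $\pairinsert^{-1}$-reducible. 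Part (ii) is routine: each reduction only deletes edges, planarity is inherited from the embedding, the degree bound is trivially preserved, and connectedness is preserved because $\cutedgedouble^{-1}$ is used only when the remaining parallel edge still joins the two sides.

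The main obstacle is (iii). I would prove local confluence in the style of Newman's lemma: if two distinct reductions $R_1, R_2$ apply to the same $L$, then $R_1(L)$ and $R_2(L)$ can be further reduced to a common graph. Since all four reductions act on a bounded neighborhood of the embedding, two reductions whose affected regions are disjoint commute trivially, so the work is a finite case analysis of overlapping configurations: a loop at a vertex of a bigon, two bigons sharing an edge, a cut-edge pair adjacent to an $\pairinsert$-reducible configuration, an $\pairinsert^{-1}$-region containing a further multi-edge, and so forth. For each such overlap, one checks directly that either $R_1$ and $R_2$ produce identical graphs, or that a single short follow-up sequence reconciles them. Termination is automatic because every reduction strictly decreases the edge count, so local confluence plus termination yields global confluence, and hence the uniqueness of $L_0$. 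The hardest cases will be those involving $\pairinsert^{-1}$, since its defining condition (two degree-$2$ vertices sharing a face) is the least local of the four rules and can interact in subtle ways with adjacent $\edgedouble^{-1}$ and $\cutedgedouble^{-1}$ reductions.
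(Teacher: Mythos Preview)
Your overall architecture is sound, but two pieces diverge from the paper in ways worth flagging.

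\textbf{Existence, the ``otherwise'' case.} Your sentence ``a parity-of-degrees count on the interior subgraph forces the existence of two degree-$2$ vertices on a common interior face, and the outermost such pair is $\pairinsert^{-1}$-reducible'' does not parse for a link shadow: every interior vertex of $\overline D$ has degree $4$, so there are no interior degree-$2$ vertices to find, and $\pairinsert^{-1}$ does not act on pre-existing degree-$2$ vertices---it \emph{creates} them by deleting a pair of parallel edges between degree-$4$ vertices. What actually happens in your ``otherwise'' case is that $e,e'$ themselves are the $\pairinsert^{-1}$-reducible pair. The paper establishes this not through a minimal-disk argument but by a direct classification of how two parallel edges can sit in the cyclic orders at their endpoints (the cases labelled $1b$, $1d$, $2c$, with the others ruled out by a parity argument coming from $4$-regularity of the original $L$). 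Your three cases do in fact line up with $1d$ (bigon), $1b$ (cut edge), and $2c$ (pair), but you have not justified the last identification; the parity argument you need is on edge-endpoint positions, not on degrees inside $D$.

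\textbf{Uniqueness.} Your Newman's-lemma route would work, but the paper avoids the entire local-confluence case analysis with a much shorter observation: the applicability of each reduction to a given loop or multi-edge depends only on the cyclic positions of its endpoints in $L$ (and, for $\cutedgedouble$ and $\pairinsert$, on connectivity data that is itself preserved by all reductions). Hence the set of reductions that will ever be performed, and what each one deletes, is determined up front by $L$; one is ``simply reducing the multiplicity of multiple edges,'' and the embedding of $L_0$ is read off from the surviving cyclic orders. This replaces your hardest step (overlapping $\pairinsert^{-1}$ configurations) with a one-paragraph argument. Your confluence approach is not wrong, just heavier than necessary, and it would still need the position-based analysis above to handle the $\pairinsert^{-1}$ cases cleanly.
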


An illustration of the process we describe is shown in Figure~\ref{fig:collapse multigraph}.
\begin{figure}[H]
\begin{center}
\includegraphics[width=4in]{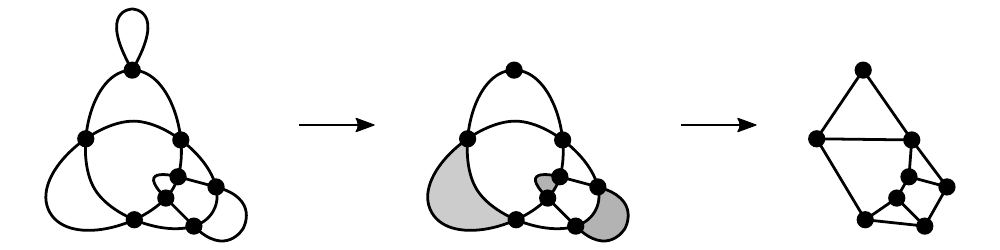}
\end{center}
\caption{\label{fig:collapse multigraph} Any link shadow can be reduced to a connected, embedded planar simple graph of vertex degree $\leq 4$ by a series of reductions, according to Proposition \ref{prop:reduce}.}
\end{figure}

The proof appears in Appendix~\ref{app:reduction proof}. We can now build the link shadows by applying expansions to the graphs produced by~\emph{plantri}. This is not trivial. First, not all sequences of expansion moves lead to link shadows. Second,
different sequences of expansion moves may produce isomorphic link shadows. Some examples are shown in
Figure~\ref{fig:expansion woes}.
\begin{figure}[H]
\begin{center}
\includegraphics[width=4in]{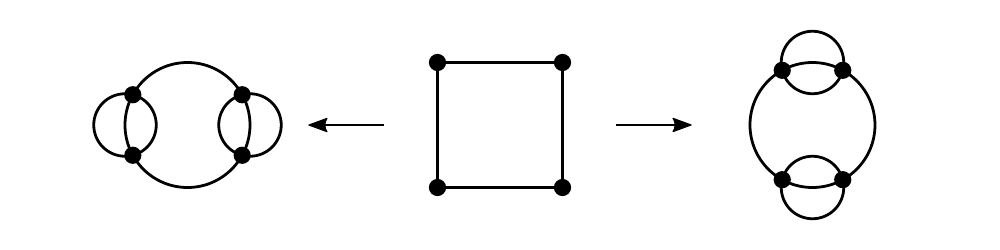}
\end{center}
\caption{Two different sequences of $\edgedouble$ moves lead to isomorphic expansions of this simple graph. This means that the output of the expansion process will contain duplicate link shadows which must be detected and eliminated.}
\label{fig:expansion woes}
\end{figure}

We can think of this as the problem of searching for collections of expansion moves which obey various equations, such as the fact that the total vertex degree must be 4 in any complete solution (there are less obvious equations as well-- the complete system is specified in Appendix~\ref{app:branch and bound}).

We find all of the solutions using a standard branch-and-bound algorithm and add the results to a pdstor to eliminate isomorphic pd-codes. We can save a lot of time in this process by noting that~Proposition~\ref{prop:reduce} tells us that we need only check expansions of the same graph (and their reorientations) against each other for potential pd-isomorphisms.

\section{Computing and verifying the results}

We implemented the above algorithms in C and used them to generate a list of link shadows up to 10 crossings, each unique up to pd-isomorphism. The longest run took several days on a desktop computer. We then extracted the knot shadows, leaving the link shadows for future work. We assigned orientations and over-under information at the crossings to each shadow to make a list of diagrams containing various duplicate diagrams due to symmetry. We eliminated diagram-isomorphic duplicates from this list to arrive at a database of diagrams. Many knots were identified uniquely by their HOMFLY polynomial, which we computed using the Millett/Ewing HOMFLY-PT code~\cite{MR98k:57010}. About 6.5 million diagrams whose HOMFLY corresponded to more than one knot were classified by using Bar-Natan and Morrison's KnotTheory \emph{Mathematica} package to compute the Kauffman polynomial and signature. Since KnotTheory gives incorrect answers for diagrams with one-component ``loop'' faces, we had to eliminate these faces before computation.

This part of the computation was large enough to require organization; several thousand hours of computer time were required to expand the 10 crossing shadows into diagrams and compute their HOMFLY polynomials. We first divided the 10 crossing shadows into roughly 10,000 pieces, each corresponding to about 20 minutes of computation time. These input files were stored on the Amazon S3 storage service. We then entered a message for each input file into a queue in the Amazon SQS service. Worker processors read job descriptions from the queue; SQS then placed a temporary hold on the messages. If the job completed, the workers deleted the job message from the queue and uploaded their results to S3; if jobs failed to complete, SQS placed the message back into the queue after a delay of one hour. The worker processes ran on a mixture of local hardware and virtual Linux machines running in Amazon computing centers in Oregon and Virginia. Time on the virtual machines was obtained by bidding for an hourly price for computation. We paid an average of $0.4$ cents/CPU hour for virtual machines and were able to lease 400 simultaneous cores at this price. The counts of shadows and diagrams are given in Table~\ref{tab:counts}.

\begin{table}[H]
\begin{ruledtabular}
\begin{tabular}{llllll}
$\Cr$ & Prime Shadows & Link Shadows    & Knot Shadows     & Knot Diagrams \\ \hline
3  & $\num{1}$   & $\num{7}$         & $\num{6}$        & $\num{36}^*$ \\
4  & $\num{2}$   & $\num{30}$        & $\num{19}$       & $\num{276}^*$ \\
5  & $\num{3}$   & $\num{124}$       & $\num{76}$       & $\num{2936}^*$ \\
6  & $\num{9}$   & $\num{733}$       & $\num{376}$      & $\num{35872}^*$ \\
7  & $\num{18}$  & $\num{4586}$      & $\num{2194}$     & $\num{484088}^*$ \\
8  & $\num{62}$  & $\num{33373}$     & $\num{14614}$    & $\num{6967942}^*$ \\
9  & $\num{198}$ & $\num{259434}^*$  & $\num{106421}$   & $\num{105555336}^*$ \\
10 & $\num{803}$ & $\num{2152298}^*$ & $\num{823832}$   & $\num{1664142836}^*$ \\
\end{tabular}
\end{ruledtabular}
\caption{The number of link (including knots) and knot shadows and diagrams through 10 crossings. The unstarred numbers in the column of prime shadows are sequence A113201 in the OEIS. The unstarred numbers in the table of link shadows matche K\'apolnai et al.~\cite{Kapolnai:2012hs}. The numbers in the column of knot shadows are sequence A008989 in the OEIS. They match the ``UU, $g=0$'' row on page 43 of the the recent preprint of Coquereaux et al.~\cite{Coquereaux:2015wv}, including the value $\num{823832}$ for $\Cr=10$ which those authors give as ``should be confirmed''. The starred numbers are new.}
\label{tab:counts}
\end{table}

The most important question, of course, is how the computation was checked. Our implementation was careful and involved quite a bit of internal self-checking as well as testing against \valgrind for memory problems, and writing a suite of unit tests for the codebase. However, good programming practices can only provide a limited measure of confidence in the results, so we continued to test our work. The first and most important test was to verify that the lists of knot and link shadows obtained by connect summing and by expansions were identical.

We were also able to check against some existing enumerations. K\'apolnai et al.\ classified spherical ``multiquadrangulations''~\cite{Kapolnai:2012hs}, which are the duals of our diagrams as explained in Section~\ref{subsec:connect sum}. Their table 2 of the count of multiquadrangulations matches our count of diagrams exactly through $8$ crossings (note that the quadrangulation has two more vertices than the dual knot diagram has crossings, so their data is shifted by two). It's worth observing that these authors also use \emph{plantri}, so their results are not completely independent from ours. Still, it provides some comfort to see that their implementation on top of \plantri produces the same results as ours. For knot shadows in particular, Arnol'd has given counts of the number of immersions of the unoriented circle into the unoriented sphere with $n$ crossings for $n$ from $0$ to $5$~\cite[page 79]{Arnold:1994wr}. This is sequence A008989 in the Online Encyclopedia of Integer Sequences, with an extension to $n=7$ credited to Guy H.\ Valette. We could not find a published reference for Vallette's extension of the table, but our data for knots does match A008989 including the extension.  Coquereaux et al.~\cite{Coquereaux:2015wv} have recently extended Valette's count using other means (their count is independent of \emph{plantri}) and our numbers also match those in that paper.

Looking at Table~\ref{tab:counts}, one is struck by how close the number of distinct knot diagrams is to the maximum number $2^{\Cr+1} \times (\# \text{ knot shadows})$. To take 8 crossing diagrams as an example, we would expect at most $\num{7482368} =\num{14614} \times 2^9$, and we have $\num{6967942}$-- about $93\%$ of the maximum possible number. By the time we reach 10 crossing diagrams, the corresponding fraction is roughly $98.6\%$. We have fewer distinct diagrams only because some of the underlying knot diagrams have symmetries. For instance, the trefoil diagram $\tref$ has a 3-fold rotational symmetry, so the crossing sign assignments $+--$, $-+-$ and $--+$ are all the same. However, our computations reveal that such symmetries quickly become very rare as the number of crossings increases. Table~\ref{tab:automorphisms} shows the mean number of automorphisms of a knot shadow, which rapidly decreases to 1 (the identity map), as is known for sufficiently large numbers of crossings~\cite{Chapman2015knotasymp}.

\begin{table}[H]
\begin{ruledtabular}
\renewcommand{\arraystretch}{1.2}
\begin{tabular}{llllllllll}
Cr & $3$ & $4$ & $5$ & $6$ & $7$ & $8$ & $9$ & $10$ \\ \hline
Mean Automorphisms   &  5  & $\frac{64}{19}$ & $\frac{44}{19}$ & $\frac{159}{94}$ & $\frac{1447}{1097}$ & $\frac{8426}{7307}$ & $\frac{113460}{106421}$ & $\frac{846979}{823832}$ \\
\hphantom{Mean} --- (decimal) & 5.00 & 3.37 & 2.32 & 1.69 & 1.32 & 1.15 & 1.07 & 1.03 \\
\end{tabular}
\end{ruledtabular}
\caption{The mean number of automorphisms decreases rapidly as the number of crossings in the diagram increases. This means that the number of knot diagrams (with crossing signs and orientations) rapidly approaches the maximum allowed by the number of knot shadows.}
\label{tab:automorphisms}
\end{table}

\subsection{Knot Types and Inferred Counts}

Counting knot types required us to be careful about knot symmetries. To review, mirroring crossings and reversing orientation yield a $\Z_2 \times \Z_2$ action on knot types. If a knot is isotopic to its image under a subgroup of this group, it is said to have a symmetry; there are five symmetry types corresponding to the five subgroups of this group: ``none'', ``mirror'', ``reversible'', ``amphichiral'' (the diagonal subgroup), and ``full''. We refer to a collection of knot types related by the group as a ``base knot type''. For instance, the base knot type $3_1$ consists of the two knot types $3_1$ and $3_1^m$ (the mirror image of $3_1$).

We were able to use the HOMFLY-PT and Kauffmann polynomials and the knot signature to classify almost all of the knots whose base type had symmetry ``full'' or ``reversible'', since these invariants distinguish all knots with 10 crossings or less from their mirror images except the $10_{71}$ knot. There are 36 base types with one of the other three symmetries. These were more difficult to classify as classical invariants don't distinguish knots from their reversals. We relied on symmetry to infer the distribution of counts:

\begin{lemma}
If a base knot type $K$ has symmetry type ``amphichiral'' ($K = K^{mr}$, $K^{m} = K^r$, but $K \neq K^r$) or ``mirror'' ($K = K^m$, $K^r = K^{mr}$, but $K \neq K^r$), then the number of diagrams of the two knot types are equal. If $K$ has symmetry type ``none" ($K \neq K^m \neq K^r \neq K^{mr}$) the number of diagrams of each of these four knot types are equal.
\end{lemma}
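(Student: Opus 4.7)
The plan is to exhibit explicit bijections between the sets of diagrams representing the four (or two) knot types in a base type, induced by the natural actions of mirroring crossings and reversing orientation on the set of all diagrams.

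First, I would define two operations on $n$-crossing knot diagrams: the \emph{mirror} operation $m$, which swaps over/under information at every crossing, and the \emph{reversal} operation $r$, which reverses the orientation of the single underlying circle. Both are involutions on the set of diagrams, and I would verify that each descends to a well-defined involution on the set of diagram-isomorphism classes. The key observation is that a diagram isomorphism is a diffeomorphism $\phi\co S^2\to S^2$ (possibly orientation-reversing on $S^2$) that respects over/under information and preserves curve orientation. If $\phi$ carries $D$ to $D'$, then the same $\phi$ carries $m(D)$ to $m(D')$ (since $\phi$ preserves the relation ``strand $a$ passes over strand $b$'' and therefore also its negation), and it carries $r(D)$ to $r(D')$ (since $\phi$ preserves the orientation of the curve, it also preserves the reversed orientation). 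By construction $m$ takes a diagram of knot type $K$ to a diagram of $K^m$, and $r$ takes a diagram of $K$ to one of $K^r$.

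Now for each symmetry type I would read off the bijections. In the ``amphichiral'' case ($K=K^{mr}$, $K^m=K^r$, $K\neq K^r$), the involution $m$ sends diagrams of $K$ bijectively to diagrams of $K^m=K^r$, yielding equality of the two diagram counts. In the ``mirror'' case ($K=K^m$, $K^r=K^{mr}$, $K\neq K^r$), the involution $r$ sends diagrams of $K$ bijectively to diagrams of $K^r$, giving the desired equality. In the ``none'' case, apply $m$ and $r$ in turn: $m$ gives a bijection between diagrams of $K$ and $K^m$ and between diagrams of $K^r$ and $K^{mr}$, while $r$ gives a bijection between diagrams of $K$ and $K^r$ and between diagrams of $K^m$ and $K^{mr}$. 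Composing these bijections shows all four counts are equal.

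The only nontrivial step is the well-definedness of $m$ and $r$ on isomorphism classes, which hinges on the precise form of diagram isomorphism recalled in the definitions section; once this is in hand, the case-by-case bijection argument is immediate. I do not anticipate any further obstacle.
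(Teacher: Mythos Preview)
Your proposal is correct and follows essentially the same approach as the paper: both arguments use the operations of mirroring crossings and reversing orientation, check that these commute with diagram isomorphism (so descend to diagram-isomorphism classes), and conclude that the diagram counts for the related knot types coincide.

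There is one minor presentational difference worth noting. The paper phrases the argument in terms of a group action and then spends a paragraph explaining why this action is \emph{free} on the relevant diagram classes (pointing out, via the $7_4$ example, that freeness can fail for other knot types, and then observing that a fixed point would force a diagram-isomorphism and hence an isotopy between $K$ and its image, contradicting the symmetry hypothesis). Your direct bijection argument sidesteps this: since $m$ and $r$ are involutions on all diagram classes and carry type $K$ to type $K^m$ (respectively $K^r$), they automatically restrict to bijections between those two sets of diagrams, with no appeal to freeness needed. Both arguments are valid; yours is slightly more economical, while the paper's makes explicit the subtle point that the hypothesis $K\neq K^r$ (etc.) is what rules out fixed diagrams.
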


\begin{proof}
In each case, there is a group action on diagrams (reverse the orientation of a diagram or take the 4-element group generated by reversing crossing signs and orientation) which converts diagrams of one knot of these knot types to a diagrams of the other types. This action extends to diagram-isomorphism classes of diagrams because the action commutes with any given diagram-isomorphism.

It is not always the case that this action is free on diagram-isomorphism classes of diagrams of any knot type. For instance, taking the mirror image of this diagram of the $7_4$ knot \raisebox{-0.4 \height}{\includegraphics[height=0.25in]{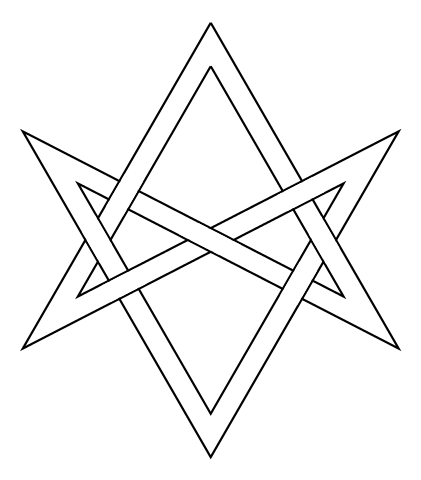}} is equivalent to rotating it by 180 degrees (a diagram-isomorphism). This implies that $7_4$ is a reversible knot, as diagram-isomorphisms are knot isotopies. But by hypothesis, our $K$ is \emph{not} isotopic to its images under the group action, and therefore it cannot by diagram-isomorphic either.

Since the action of the group on this set of diagram-isomorphism classes is free and exchanges the various knot types, the number of diagram-isomorphism classes of diagrams in each knot type must be the same.
\end{proof}

For example, using this lemma we split our original count of $\num{5672}$ 10-crossing diagrams of base type $8_{17}$ into $\num{2836}$ diagrams of type $8_{17}$ and $\num{2836}$ diagrams of type $8_{17}^r$, even though we had no way of knowing which of our diagrams was assigned to each knot type.

The relative frequencies of all the knot types appear in Figure~\ref{fig:knot frequency loglog} in a log-log plot. The figure also shows the 10 most frequent knot types, from the unknot (the most frequent knot type in all our data) and the trefoils $3_1$ and $3_1^m$ through the square knot $3_1 \# 3_1^m$ and the $6_3$ knot, which is the 10th most common knot in all our data. Though we know~\cite{Chapman2015knotasymp} both that unknotted diagrams eventually become exponentially rare, and that the rank-order of the knot types must eventually change, we do not see this effect in our data; the list of 10 most common knots is the same for crossing numbers 6-10. Table~\ref{tab:knot frequency raw data} gives more detailed frequency data for the 40 most common knot types (among 10 crossing diagrams). This data does show some reordering of knot types as crossing number increases.

\begin{table}[H]
\begin{ruledtabular}
\begin{tabular}{ccccccccc}
Crossing Number &
3 & 4 & 5 & 6 & 7 & 8 & 9 & 10 \\
Number of Diagrams &
$\num{36}$ &
$\num{276}$ &
$\num{2936}$ &
$\num{35872}$ &
$\num{484088}$ &
$\num{6967942}$ &
$\num{105555336}$ &
$\num{1664142836}$\\ \hline
$0_1$ &
$\num{34}$ &
$\num{265}$ &
$\num{2744}$ &
$\num{32456}$ &
$\num{422332}$ &
$\num{5852832}$ &
$\num{85253534}$ &
$\num{1291291155}$\\
$3_{1}^{}$ &
$\num{1}$ &
$\num{5}$ &
$\num{85}$ &
$\num{1466}$ &
$\num{25432}$ &
$\num{440570}$ &
$\num{7696083}$ &
$\num{135702456}$\\
$3_{1}^{m}$ &
$\num{1}$ &
$\num{5}$ &
$\num{85}$ &
$\num{1466}$ &
$\num{25432}$ &
$\num{440570}$ &
$\num{7696083}$ &
$\num{135702456}$\\
$4_{1}^{}$ &
-- &
$\num{1}$ &
$\num{18}$ &
$\num{412}$ &
$\num{8450}$ &
$\num{165791}$ &
$\num{3175612}$ &
$\num{60146706}$\\
$5_{2}^{}$ &
-- &
-- &
$\num{1}$ &
$\num{24}$ &
$\num{730}$ &
$\num{18075}$ &
$\num{415290}$ &
$\num{9025926}$\\
$5_{2}^{m}$ &
-- &
-- &
$\num{1}$ &
$\num{24}$ &
$\num{730}$ &
$\num{18075}$ &
$\num{415290}$ &
$\num{9025926}$\\
$3_{1}^{}\#3_{1}^{m}$ &
-- &
-- &
-- &
$\num{2}$ &
$\num{112}$ &
$\num{3953}$ &
$\num{113684}$ &
$\num{2923783}$\\
$6_{3}^{}$ &
-- &
-- &
-- &
$\num{2}$ &
$\num{106}$ &
$\num{3515}$ &
$\num{96666}$ &
$\num{2389180}$\\
$6_{2}^{}$ &
-- &
-- &
-- &
$\num{1}$ &
$\num{58}$ &
$\num{2027}$ &
$\num{58354}$ &
$\num{1493624}$\\
$6_{2}^{m}$ &
-- &
-- &
-- &
$\num{1}$ &
$\num{58}$ &
$\num{2027}$ &
$\num{58354}$ &
$\num{1493624}$\\
$3_{1}^{}\#3_{1}^{}$ &
-- &
-- &
-- &
$\num{2}$ &
$\num{58}$ &
$\num{2006}$ &
$\num{56893}$ &
$\num{1461498}$\\
$3_{1}^{m}\#3_{1}^{m}$ &
-- &
-- &
-- &
$\num{2}$ &
$\num{58}$ &
$\num{2006}$ &
$\num{56893}$ &
$\num{1461498}$\\
$6_{1}^{m}$ &
-- &
-- &
-- &
$\num{1}$ &
$\num{34}$ &
$\num{1267}$ &
$\num{38199}$ &
$\num{1015996}$\\
$6_{1}^{}$ &
-- &
-- &
-- &
$\num{1}$ &
$\num{34}$ &
$\num{1267}$ &
$\num{38199}$ &
$\num{1015996}$\\
$3_{1}^{}\#4_{1}^{}$ &
-- &
-- &
-- &
-- &
$\num{8}$ &
$\num{516}$ &
$\num{20458}$ &
$\num{648362}$\\
$3_{1}^{m}\#4_{1}^{}$ &
-- &
-- &
-- &
-- &
$\num{8}$ &
$\num{516}$ &
$\num{20458}$ &
$\num{648362}$\\
$7_{6}^{m}$ &
-- &
-- &
-- &
-- &
$\num{3}$ &
$\num{193}$ &
$\num{7608}$ &
$\num{240121}$\\
$7_{6}^{}$ &
-- &
-- &
-- &
-- &
$\num{3}$ &
$\num{193}$ &
$\num{7608}$ &
$\num{240121}$\\
$7_{7}^{}$ &
-- &
-- &
-- &
-- &
$\num{2}$ &
$\num{124}$ &
$\num{4709}$ &
$\num{144455}$\\
$7_{7}^{m}$ &
-- &
-- &
-- &
-- &
$\num{2}$ &
$\num{124}$ &
$\num{4709}$ &
$\num{144455}$\\
$7_{5}^{m}$ &
-- &
-- &
-- &
-- &
$\num{2}$ &
$\num{102}$ &
$\num{4244}$ &
$\num{138467}$\\
$7_{5}^{}$ &
-- &
-- &
-- &
-- &
$\num{2}$ &
$\num{102}$ &
$\num{4244}$ &
$\num{138467}$\\
$7_{2}^{m}$ &
-- &
-- &
-- &
-- &
$\num{1}$ &
$\num{44}$ &
$\num{2103}$ &
$\num{74739}$\\
$7_{2}^{}$ &
-- &
-- &
-- &
-- &
$\num{1}$ &
$\num{44}$ &
$\num{2103}$ &
$\num{74739}$\\
$7_{3}^{}$ &
-- &
-- &
-- &
-- &
$\num{1}$ &
$\num{39}$ &
$\num{1793}$ &
$\num{62059}$\\
$7_{3}^{m}$ &
-- &
-- &
-- &
-- &
$\num{1}$ &
$\num{39}$ &
$\num{1793}$ &
$\num{62059}$\\
$4_{1}^{}\#4_{1}^{}$ &
-- &
-- &
-- &
-- &
-- &
$\num{20}$ &
$\num{1176}$ &
$\num{51526}$\\
$7_{4}^{m}$ &
-- &
-- &
-- &
-- &
$\num{1}$ &
$\num{36}$ &
$\num{1516}$ &
$\num{49731}$\\
$7_{4}^{}$ &
-- &
-- &
-- &
-- &
$\num{1}$ &
$\num{36}$ &
$\num{1516}$ &
$\num{49731}$\\
$8_{20}^{}$ &
-- &
-- &
-- &
-- &
-- &
$\num{14}$ &
$\num{985}$ &
$\num{41843}$\\
$8_{20}^{m}$ &
-- &
-- &
-- &
-- &
-- &
$\num{14}$ &
$\num{985}$ &
$\num{41843}$\\
$3_{1}^{m}\#5_{2}^{m}$ &
-- &
-- &
-- &
-- &
-- &
$\num{10}$ &
$\num{784}$ &
$\num{36548}$\\
$3_{1}^{}\#5_{2}^{}$ &
-- &
-- &
-- &
-- &
-- &
$\num{10}$ &
$\num{784}$ &
$\num{36548}$\\
$3_{1}^{m}\#5_{2}^{}$ &
-- &
-- &
-- &
-- &
-- &
$\num{10}$ &
$\num{784}$ &
$\num{36544}$\\
$3_{1}^{}\#5_{2}^{m}$ &
-- &
-- &
-- &
-- &
-- &
$\num{10}$ &
$\num{784}$ &
$\num{36544}$\\
$8_{21}^{}$ &
-- &
-- &
-- &
-- &
-- &
$\num{9}$ &
$\num{574}$ &
$\num{24611}$\\
$8_{21}^{m}$ &
-- &
-- &
-- &
-- &
-- &
$\num{9}$ &
$\num{574}$ &
$\num{24611}$\\
$8_{14}^{}$ &
-- &
-- &
-- &
-- &
-- &
$\num{6}$ &
$\num{442}$ &
$\num{19412}$\\
$8_{14}^{m}$ &
-- &
-- &
-- &
-- &
-- &
$\num{6}$ &
$\num{442}$ &
$\num{19412}$\\
$7_{1}^{m}$ &
-- &
-- &
-- &
-- &
$\num{1}$ &
$\num{8}$ &
$\num{444}$ &
$\num{17441}$
\end{tabular}
\end{ruledtabular}
\caption{This table shows the number of diagrams of each knot type among all knot diagrams with between 3 and 10 crossings. The knot types shown are the most common 40 knot types among 10 crossing diagrams, and they appear in the order of their frequency among 10 crossing diagrams. This is not the same rank order for all crossing numbers-- one can observe that $4_1 \# 4_1$ is less common than $7_4$ among 8 crossing diagrams ($\num{20}$ diagrams versus $\num{36}$ diagrams) but more common than $7_4$ among 10 crossing diagrams ($\num{51526}$ diagrams versus $\num{49731}$ diagrams).}
\label{tab:knot frequency raw data}
\end{table}

More interestingly, the log-log plot of ranked knot frequencies for the 622 different possible knot types for 10 crossing diagrams is roughly linear over 9 orders of magnitude in frequency (there are about $1.6 \times 10^9$ diagrams, with the most frequent knot type (the unknot) occurring $1.2 \times 10^9$ times and the least frequent knot types (a 98-way tie among various 10-crossing knots) appearing exactly once. It is not clear to us why this phenomenon should occur in the data: this data is certainly compatible with the hypothesis that there is an (asymptotic) power-law relationship between knot rank and knot probability akin to Zipf's law, but it would take much larger experiments to provide strong statistical support for such a conjecture.

\begin{figure}[H]
\hfill
\begin{overpic}[width=3in]{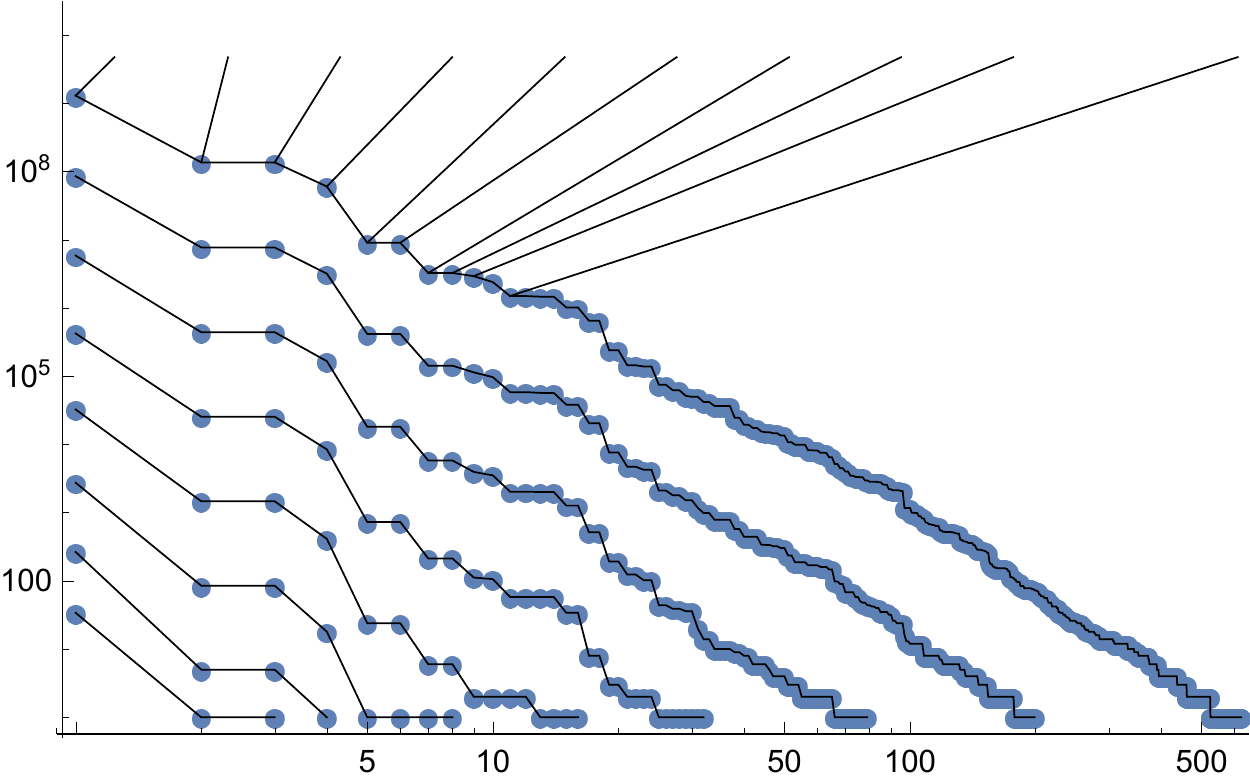}
\put(8,59){$0_1$}
\put(16.5,59){$3_1$}
\put(25,59){$3_1^m$}
\put(34,59){$4_1$}
\put(44,59){$5_2$}
\put(53,59){$5_2^m$}
\put(62,59){$5_1$}
\put(71,59){$5_1^m$}
\put(78,59){$3_1 \# 3_1^m$}
\put(98,59){$6_3$}
\end{overpic}
\hfill
\begin{minipage}[b]{1.5in}
\begin{ruledtabular}
\begin{tabular}{ccc}
$\Cr$ & Unknots & (decimal) \\ \hline
\\ [-0.15in]
 3 & $\frac{\num{17}}{\num{18}}$ & 0.94 \\ [0.05in]
 4 & $\frac{\num{265}}{\num{276}}$ & 0.96 \\[0.05in]
 5 & $\frac{\num{343}}{\num{367}}$ & 0.93 \\[0.05in]
 6 & $\frac{\num{4057}}{\num{4484}}$ & 0.90 \\[0.05in]
 7 & $\frac{\num{105583}}{\num{121022}}$ & 0.87 \\[0.05in]
 8 & $\frac{\num{2926416}}{\num{3483971}}$ & 0.84  \\[0.05in]
 9 & $\frac{\num{42626767}}{\num{52777668}}$ & 0.81 \\[0.05in]
 10 & $\frac{\num{1291291155}}{\num{1664142836}}$ & 0.78
\end{tabular}
\end{ruledtabular}
\end{minipage}
\hfill
\hphantom{.}

\caption{A log-log plot of knot frequencies in rank order for crossing numbers 3 through 10, with the 10 most common knot types identified. These knot types have the same rank ordering in all the crossing numbers computed. The data show some evidence of power-law behavior. At right, we see the unknot fraction in table form.}
\label{fig:knot frequency loglog}
\end{figure}

At right in Figure~\ref{fig:knot frequency loglog} we give the unknot fraction explicitly in tabular form. We now try to understand this fraction. Of course, for any shadow we can set the crossings to produce an unknot by going ``downhill'' from some distinguished edge, but this effect can only account for a tiny fraction of the unknots-- there are at most $n$ ways to assign crossings in this manner but there are $~2^n$ different crossing assignments overall. It turns out, however, that structural properties of the set of shadows explain most of the unknot fraction. To see how, we start with Figure~\ref{fig:knot grid}, which shows the first 42 shadows obtained in the enumeration.

It is immediately clear from Figure~\ref{fig:knot grid} that many of the shadows can be simplified by a Reidemeister I move; they have a face with only one edge (a monogon). In fact, we can see from Table~\ref{tab:monogons and bigons} that almost every diagram has at least one monogon and that the mean number of monogons seems to be rising linearly with $n$.

\begin{figure}[H]
\includegraphics[width=6in]{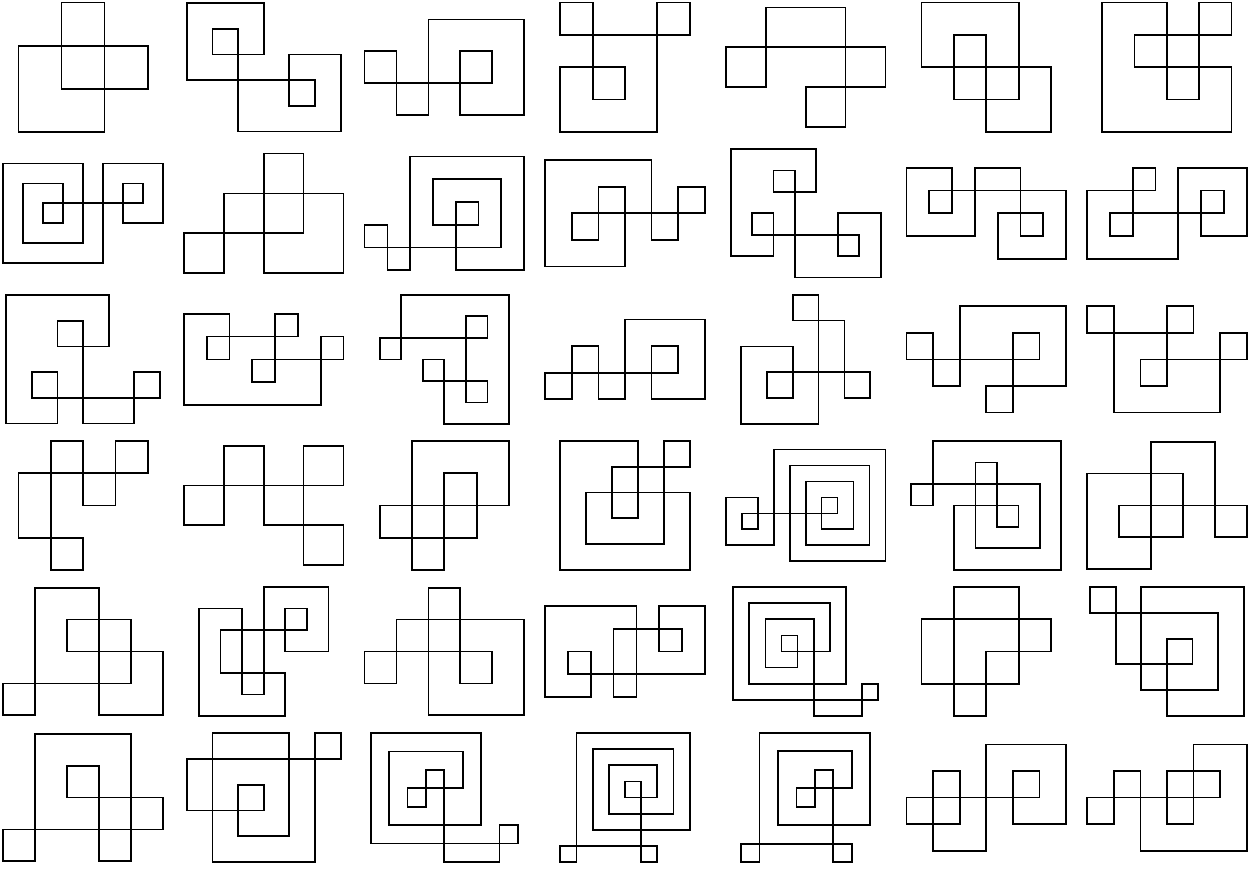}
\caption{The first 42 knot shadows.}
\label{fig:knot grid}
\end{figure}

\begin{table}[H]
\begin{ruledtabular}
\renewcommand{\arraystretch}{1.2}
\begin{tabular}{lllllllll}
Cr                 & $3$ & $4$             & $5$            & $6$                        & $7$ & $8$ & $9$ & $10$ \\ \hline
Mean $\#$ monogons & $\frac{\num{12}}{\num{6}}$ & $\frac{\num{48}}{\num{19}}$ & $\frac{\num{213}}{\num{76}}$ & $\frac{\num{1196}}{\num{376}}$ & $\frac{\num{7714}}{\num{2194}}$ & $\frac{\num{56540}}{\num{14614}}$ & $\frac{\num{448584}}{\num{106421}}$ & $\frac{\num{3758456}}{\num{823832}}$\\
\hphantom{Mean} --- (decimal) & $2.$ & $2.53$ & $2.8$ & $3.18$ & $3.52$ & $3.87$ & $4.22$ & $4.56$ \\
Monogon fraction   & $\frac{\num{5}}{\num{6}}$ & $\frac{\num{18}}{\num{19}}$ &
$\frac{\num{74}}{\num{76}}$ & $\frac{\num{371}}{\num{376}}$ &
$\frac{\num{2178}}{\num{2194}}$ &
$\frac{\num{14562}}{\num{14614}}$ &
$\frac{\num{106216}}{\num{106421}}$ &
$\frac{\num{822989}}{\num{823832}}$ \\
\hphantom{Mean} --- (decimal) & $0.833$ & $0.947$ & $0.974$ & $0.987$ & $0.993$ & $0.996$ & $0.998$ \
& $0.999$ \\ \hline
Mean $\#$ bigons & $\frac{\num{6}}{\num{6}}$ & $\frac{\num{18}}{\num{19}}$ &
$\frac{\num{88}}{\num{76}}$ & $\frac{\num{470}}{\num{376}}$ &
$\frac{\num{3037}}{\num{2194}}$ & $\frac{\num{21925}}{\num{14614}}$ &
$\frac{\num{173342}}{\num{106421}}$ & $\frac{\num{1450209}}{\num{823832}}$ \\
\hphantom{Mean} --- (decimal) & $1.$ & $0.947$ & $1.16$ & $1.25$ & $1.38$ & $1.5$ & $1.63$ & $1.76$ \\
Bigon fraction & $\frac{\num{3}}{\num{6}}$ & $\frac{\num{11}}{\num{19}}$ &
$\frac{\num{52}}{\num{76}}$ & $\frac{\num{275}}{\num{376}}$ &
$\frac{\num{1714}}{\num{2194}}$ & $\frac{\num{11892}}{\num{14614}}$ &
$\frac{\num{89627}}{\num{106421}}$ & $\frac{\num{712961}}{\num{823832}}$ \\
\hphantom{Mean} --- (decimal) & $0.5$ & $0.579$ & $0.684$ & $0.731$ & $0.781$ & $0.814$ & $0.842$ & $0.865$ \\
\end{tabular}
\end{ruledtabular}
\caption{The distribution of monogons among shadows of $n$ crossings. The mean number of monogons in an $n$-crossing diagram fits very well to $1.01927 + 0.356111 n$, while the mean number of bigons fits well to $0.563245 + 0.117749 n$ It would be interesting to know the asymptotic growth rate.}
\label{tab:monogons and bigons}
\end{table}

It is interesting to note that almost every diagram contains either a monogon or a bigon-- there are only three diagrams in our dataset without one or the other! These turn out to be the shadows of torus knots and links that Conway designated as $8^*$, $9^*$, and $10^*$. This means that we can expect to reduce the complexity of an average shadow substantially just by eliminating monogons, which can be done regardless of crossing signs. We note here that there are some adjustments to our enumeration scheme which may produce an interesting subset of shadows, but do not provide any proof: If the $\loopinsert$ loop insertion rule is removed from the graph expansion procedure, all shadows without monogons should be tabulated. Similarly, removing the one-crossing $\twistunknot$ diagram from our connect-summing procedure should tabulate all diagrams without nugatory crossings.

Put another way, almost all knot shadows are composite with the one-crossing diagram $\twistunknot$ as a (diagrammatically) prime factor. Figure~\ref{fig:proportiongraphic} shows the fraction of shadows of a given crossing number with a given number of prime summands. If the number of prime summands is equal to the crossing number $n$, the shadow is a connect sum of one-crossing $8$ diagrams-- these diagrams are called ``tree-like'' by Aicardi~\cite{Aicardi:1994uq}.
Tree-like diagrams are colored in dark on the left-hand side of Figure~\ref{fig:proportiongraphic}. There are no $n$-crossing shadows with $n-1$ prime summands (as there are no 2-crossing prime knot diagrams).

\begin{figure}[H]
\hfill
\raisebox{-0.5 \height}{\begin{overpic}[width=1.5in]{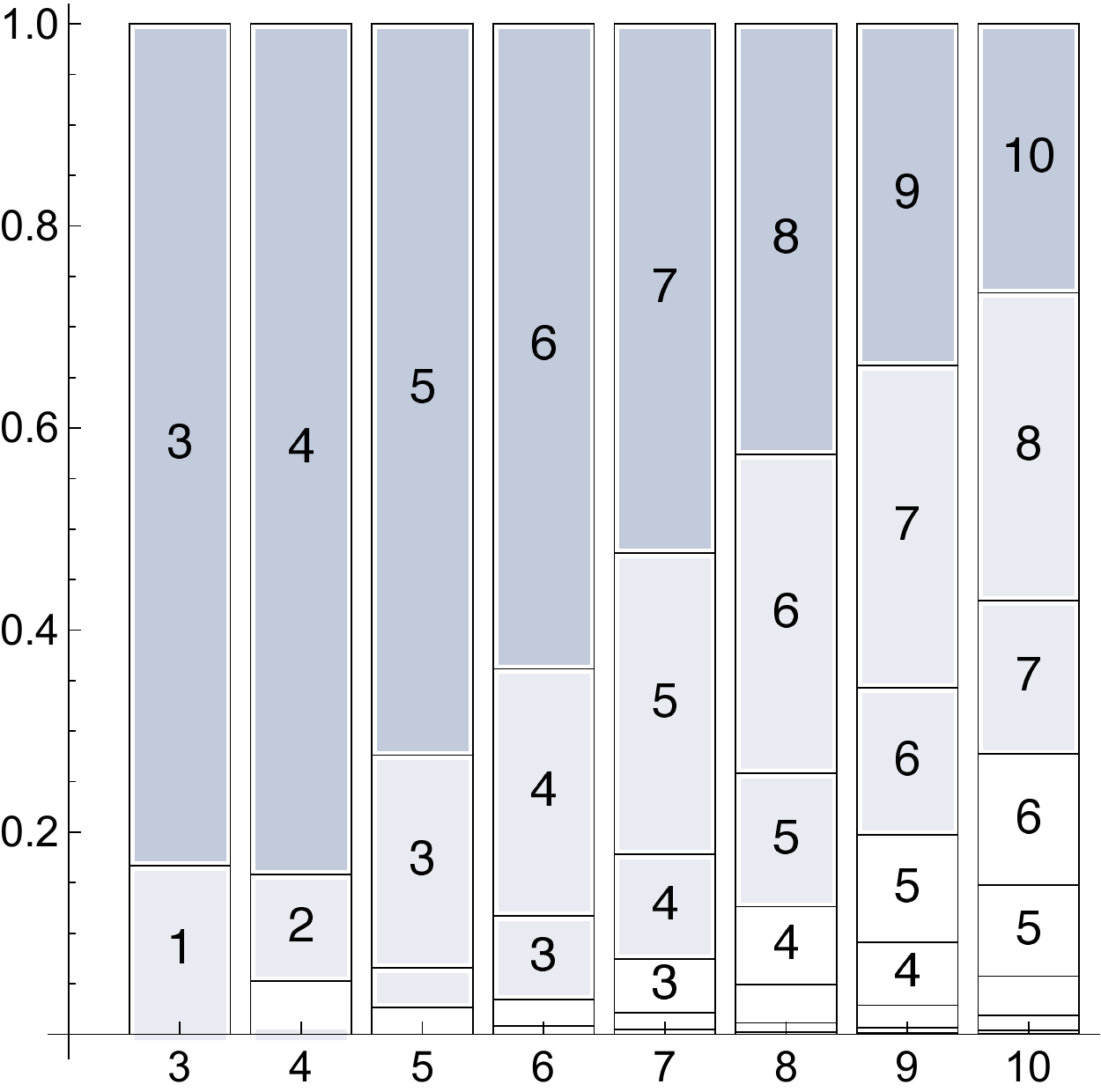}
\end{overpic}
}
\hfill
\raisebox{-0.5 \height}{
\begin{overpic}[width=1.5in]{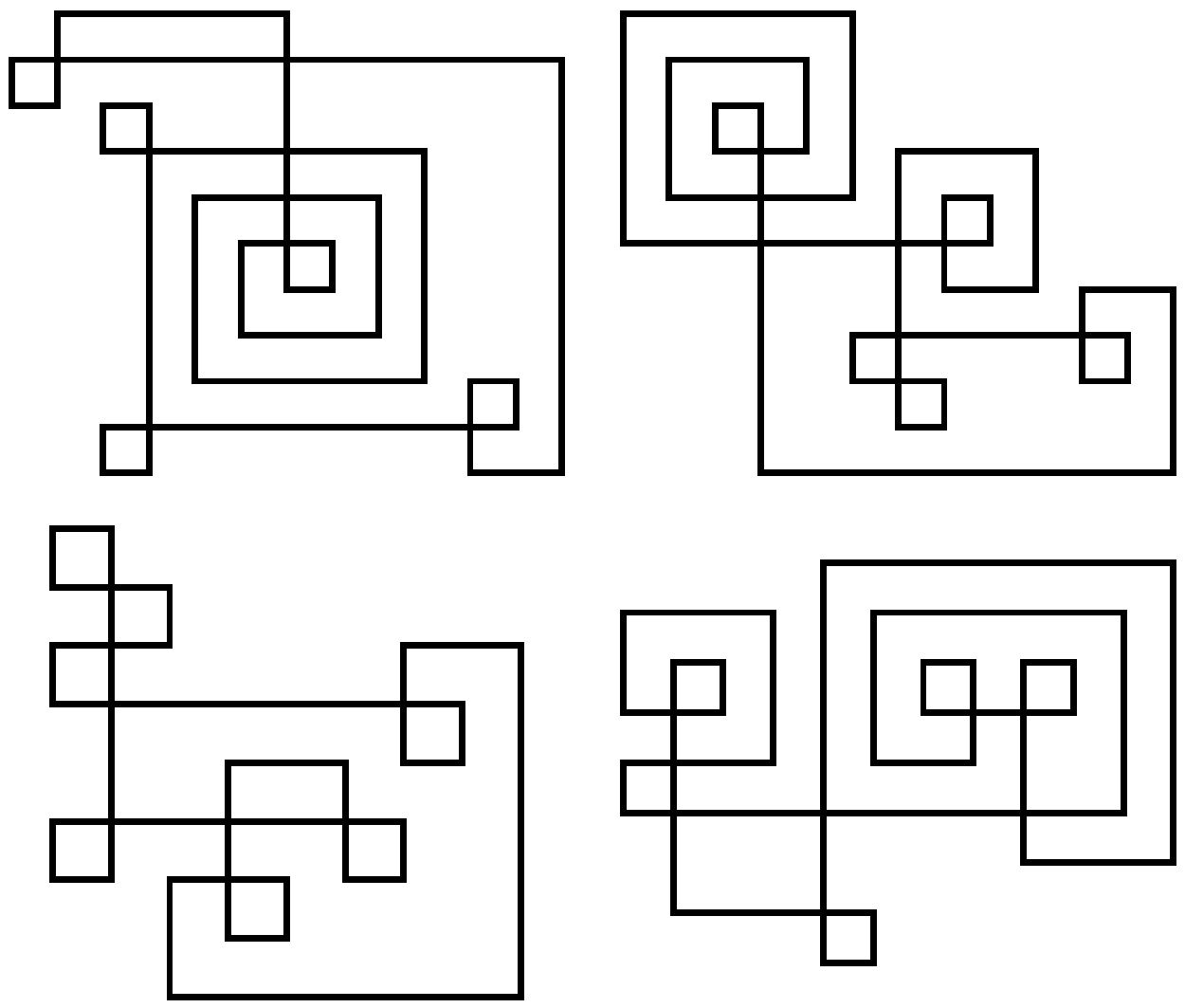}
\end{overpic}
}
\hfill
\hphantom{.}
\caption{Connect sum structure of knot shadows (left, numbers in bars show the number of connect summands), and tree-like 8-crossing shadows (right).}
\label{fig:proportiongraphic}
\end{figure}


We will call the $n$-crossing shadows with $n-2$ prime summands ``3-almost tree-like'' as they are the connect sum of a tree-like diagram and a 3-crossing prime diagram, and similarly call the $n$-crossing diagrams with $n-3$ prime summands ``4-almost tree-like'' as they are the connect sum of a tree-like diagram and a 4-crossing prime diagram. These diagrams are colored in on Figure~\ref{fig:proportiongraphic} in a lighter color.

\begin{proposition}
If a shadow is tree-like, all assignments of crossings result in the unknot; these are the only knot shadows with this property. If a shadow is 3-almost tree-like, $3/4$ of crossing assignments produce unknots. If a shadow is 4-almost tree-like, $7/8$ of crossing assignments produce unknots.
\label{prop:almost treelike mostly unknots}
\end{proposition}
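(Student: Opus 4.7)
The plan is to split the statement into its three clauses and exploit two structural facts throughout: (i) the knot type of a connect-sum diagram is the connect sum of its summands' knot types, so a connect-sum diagram is unknotted iff each prime summand is; and (ii) the $2^n$ crossing assignments on an $n$-crossing shadow factor naturally over its prime decomposition $L = L_1 \# \cdots \# L_k$ as a product $\prod 2^{|L_i|}$.

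For the tree-like clause, each prime summand is a $\twistunknot$, whose single crossing is nugatory (it is bounded by a monogon face), hence removable by a Reidemeister I move regardless of its over/under assignment. Every summand thus represents the unknot, and so does their connect sum for any global crossing choice. For the converse, if $L$ is not tree-like then some prime summand $P$ has at least $3$ crossings, and I would invoke Aicardi's characterization~\cite{Aicardi:1994uq} (or, alternatively, apply Kauffman--Murasugi--Thistlethwaite to the alternating assignment on a reduced version of $P$) to produce a crossing assignment on $P$ representing a nontrivial knot; inserting that into the connect-sum assignment on $L$ then yields a nontrivial knot by fact~(i).

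For the $3$-almost tree-like clause, write $L = T \# P$ with $T$ tree-like of $n - 3$ crossings and $P$ the unique prime $3$-crossing shadow (the trefoil shadow $\tref$). By fact~(ii), the $2^n$ assignments factor as $2^{n-3}$ choices on $T$ times $8$ on $P$, and by the tree-like case each choice on $T$ gives the unknot, so the overall knot type depends only on the choice on $P$. Among the $8$ assignments on $P$, the two uniform ones give $3_1$ and $3_1^m$, while each of the remaining $6$ differs from a uniform one by a single crossing change; since the trefoil has unknotting number $1$ (equivalently, each yields an R2-reducible bigon), all $6$ give the unknot, and the ratio is $6/8 = 3/4$. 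For the $4$-almost tree-like clause I would apply exactly the same factorisation, reducing the problem to counting unknot assignments on a prime $4$-crossing shadow $P$. There are two such shadows; for each I would enumerate the $16$ assignments directly (checkable via HOMFLY or by tracking which assignments open an R2-reducible bigon), finding in both cases exactly $14$ unknot assignments, with the remaining $2$ giving $4_1$ in the figure-eight shadow and a trefoil (after an R2 reduction to a prime $3$-crossing diagram) in the other shadow. The ratio is then $14/16 = 7/8$.

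The main technical obstacle will be the $4$-crossing enumeration, since one must treat the two prime $4$-crossing shadows separately and in one of them the nontrivial assignments produce a lower-crossing knot via an R2 reduction rather than a genuine $4$-crossing alternating knot; careful bookkeeping of the bigon faces in each shadow is needed. A secondary subtlety is the converse of the tree-like characterization, which requires the input that every prime shadow with $\ge 3$ crossings supports at least one nontrivial-knot crossing assignment --- the point at which Aicardi's result (or the alternating-knot argument) is essential.
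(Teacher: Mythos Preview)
Your overall strategy---factor over the prime decomposition and reduce to counting unknot assignments on the unique prime summand---is exactly what the paper does, and your treatment of the tree-like case and the $3$-crossing count is correct. However, you have a factual error in the $4$-almost tree-like case: there is only \emph{one} prime $4$-crossing knot shadow, the figure-eight shadow, not two. (The paper's table lists two prime \emph{link} shadows at four crossings, but only one has a single component.) In fact your own proposed Kauffman--Murasugi--Thistlethwaite argument rules out your phantom second shadow: a prime shadow has no nugatory crossings, so its alternating assignment is a reduced alternating diagram, which by K--M--T realises the minimal crossing number; a prime $4$-crossing knot shadow therefore cannot carry a trefoil, only a $4_1$. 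Once you drop that spurious case your argument is fine, since your figure-eight count of $14/16 = 7/8$ is correct.

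For the converse of the tree-like characterisation, the paper takes a route you did not mention and which is worth knowing: Polyak's formula expresses the average of the Vassiliev invariant $v_2$ over all $2^n$ crossing assignments on a fixed shadow as $\tfrac{1}{8}(J^+ + 2\operatorname{St})$ in terms of Arnol'd's plane-curve invariants, and Aicardi shows that $J^+ + 2\operatorname{St} = 0$ exactly for tree-like shadows. This gives the converse in one stroke, since a non-tree-like shadow then has nonzero average $v_2$ and hence some knotted assignment. Your K--M--T alternative (alternating assignment on a prime summand with $\geq 3$ crossings is a reduced alternating diagram of a nontrivial knot) is more elementary and equally valid; it is a nice observation that the paper does not use.
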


\begin{proof}
The calculations of unknot fractions are easy; a tree-like diagram can be reduced to the unknot by Reidemeister I moves regardless of crossing assignment. Further, there is only one prime knot shadow of 3 or 4 crossings; the assignment of all $+$ or all $-$ crossings yields a $3_1$ or $4_1$ knot, while all other assignments result in unknots.

The statement that the tree-like shadows are the only shadows with the property that all crossing assignments produce unknots is more interesting. Polyak~\cite{Polyak:1998wa} shows that the average value of the Vassiliev $v_2$ invariant (which is $0$ for unknots) over all crossing assignments for a shadow is given by the formula $\frac{1}{8}(\operatorname{J}^+ + 2 \operatorname{St})$ where $\operatorname{J}^+$ and $\operatorname{St}$ are Arnol'd invariants of the shadow (cf. \cite{Arnold:1994wr}). (While $\operatorname{J}^+$ and $\operatorname{St}$ are not spherically invariant, the sum $\operatorname{J}^+ + 2\operatorname{St}$ is.) Further, both he and Aicardi~\cite{Aicardi:1994uq} show that tree-like shadows are the only shadows with $\operatorname{J}^+ + 2 \operatorname{St} = 0$.
\end{proof}

Doing the requisite sums, this analysis predicts that roughly $77\%$ of 8-crossing diagrams, $70\%$ of 9-crossing diagrams, and $63\%$ of 10-crossing diagrams should be tree-like or almost tree-like unknots. We can compare these figures to the actual unknot fractions from Figure~\ref{fig:knot frequency loglog}: roughly $83\%$ of 8-crossing diagrams, $80\%$ of 9-crossing diagrams, and $78\%$ of 10-crossing diagrams are unknots of any kind. This suggests that the treelike phenomenon explains much of the unknot fraction. We note that these figures are not entirely comparable; while the Proposition shows that at least $63\%$ of crossing assignments to 10-crossing shadows result in unknots, some of these crossing assignments might result in diagram-isomorphic diagrams if the underlying shadow has a symmetry, and so the unknot fraction could be slightly smaller. This cannot make much difference to the estimate. The counts in Table~\ref{tab:counts} show that about $98.6\%$ of crossing assignments to 10 crossing shadows produce distinct diagrams, so the worst-case scenario is that this analysis shows that at least $61.4\%$ of 10-crossing diagrams are tree-like or almost tree-like unknots.

\section{Future Directions}

All of our collections of knot shadows are available online~\cite{suppdata}, including files of coordinates for planar embeddings of the shadows as well as their pd-codes. In addition, we include the actual counts of diagrams of various knot types as CSV files, and the files of diagrams with 8 crossings and fewer. We did not assemble files of diagrams for 9 and 10 crossing diagrams, but classified diagrams on-the-fly as we generated them (we estimate the file of 10-crossing diagrams to be about 1.6 tb in size if generated). Our source code is also available.

Once in possession of an enumeration of diagrams, it is tempting to compute knot distances (cf.~\cite{DARCY:2001fi,MoonHyeyoung:2010tm}) and unknotting numbers. We have carried out some preliminary experiments along these lines and are disappointed to report that it does not seem to resolve any of the remaining uncertainties in the knot distance tables.

More sophisticated counting procedures have been applied in recent years, particularly for shadows of knots. Jacobsen and Zinn-Justin\cite{Jacobsen:2002kia} give a transfer matrix analysis for a combinatorial description of prime, reduced, weighted, marked planar shadows for knots very similar to Gusein-Zade's enumeration~\cite{GusenZade:1994wl,GusenZade:1998jz}. Gusein-Zade's actual computer enumeration was carried out only to 10 crossings while Jacobsen and Zinn-Justin push the enumeration to the nearly unthinkable 22-crossing case, where they give an exact count of 40558226664529044000 knot shadows. Of course, their enumeration does not construct each shadow, so it cannot be used easily to estimate the unknot fraction.

Schaeffer~\cite{Schaeffer:1997wo} (see also~\cite{Bouttier:2002iu}) constructed a very insightful bijection between link shadows with a distinguished, directed edge (``rooted link shadows'') and a class of decorated trees called ``blossom trees''. One of us (Chapman), uses this identification to analyze the asymptotic behavior of our model in further work~\cite{Chapman2015knotasymp}, proving that most diagrams are heavily composite and hence represent nontrivial knots.

\acknowledgments
The authors are incredibly grateful to Eric Rawdon, who graciously checked our computations of knot types for the various diagrams using his own knot classification code. We also would like to acknowledge the hospitality of the Issac Newton Institute, where much of the original code for this project was written, the support of the Simons Foundation (\#284066, Jason Cantarella), and the support of Amazon Web Services under the AWS Educate program. We would also like to acknowledge many helpful conversations with our friends and colleagues, including Clayton Shonkwiler, Ken Millett, Chris Soteros, Stu Whittington, Gary Iliev, Tetsuo Deguchi, Rafa\l{} Komendarczyk, Andrew Rechnitzer, Julien Courtiel, Marni Mishna, and \'Eric Fusy.

\appendix
\section{pd-codes and isomorphisms}
\label{app:pdcodes}

In this appendix, we give more detail on our algorithm for detecting pd-isomorphic shadows. It is clear that a lot of data about a \pdcode is preserved by isomorphism: for instance, the number of crossings, edges, faces, and the numbers of edges around faces. We can use this information to rule out isomorphisms using a hashing scheme.

\begin{definition}
Suppose the \pdcode $P$ has $V$ crossings, $E$ edges, $F$ faces, and $C$ components. We assume that the faces are denoted $f_1, \dots, f_F$ and the components are denoted $c_1, \dots, c_C$. Further, let $\edges(x)$ give the number of edges on a face or component. Then the \emph{hash} of $P$ is given by the tuple
\begin{equation*}
\mathcal{H}(P) = (V,E,F,C,\{ \edges(f_1), \dots, \edges(f_F) \},
 \{ \edges(c_1), \dots, \edges(c_C) \}).
\end{equation*}
The last two are unordered sets of integers.
\end{definition}

It is clear that
\begin{lemma}
If two \pdcodes $P_1$ and $P_2$ are isomorphic, then $\mathcal{H}(P_1) = \mathcal{H}(P_2)$.
\end{lemma}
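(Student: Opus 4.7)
The plan is to verify that each of the six entries appearing in the tuple $\mathcal{H}(P)$ is invariant under a pd-isomorphism, by unpacking the definition from Definition~\ref{def:PD} and checking component by component. A pd-isomorphism $\varphi$ is a bijection on vertices together with a bijection $\tilde\varphi$ on the signed edge set $\{\pm 1,\dots,\pm 2n\}$ which respects the partition of signed edges into quadruples and globally either preserves or reverses the cyclic order at every vertex; moreover $\tilde\varphi$ descends to a bijection on unsigned edges (since it carries the pair $\pm i$ to some pair $\pm j$). So the number of vertices $V$ and the number of unsigned edges $E$ are preserved by $\varphi$ for free.

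Next I would handle faces. By definition a face is an orbit of the successor map $s(e) = -(\text{predecessor of } e \text{ in its quadruple})$. The key step is to show that $\tilde\varphi$ commutes with $s$ in the orientation-preserving case, and satisfies $\tilde\varphi \circ s = s^{-1} \circ \tilde\varphi$ in the orientation-reversing case. Either way, $\tilde\varphi$ carries $s$-orbits bijectively to $s$-orbits of the same cardinality. Hence the number of faces $F$ is preserved, and, as a bonus, the unordered multiset $\{\edges(f_1),\dots,\edges(f_F)\}$ is preserved. A small care point is that reversing cyclic order at every vertex globally flips which neighbor is the ``predecessor'' versus the ``successor''; but because the sign $\pm$ on edges is simultaneously relabelled, the orbits themselves (and in particular their sizes) are unchanged.

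A similar but easier argument handles components. A component is an orbit of the ``next-edge along the strand'' map, which on signed labels amounts to $e \mapsto $ the partner $\pm i$ of $e$ in the consistent orientation structure on the circle. Again $\tilde\varphi$ either commutes with this map or intertwines it with its inverse (depending on whether the isomorphism reverses orientation of the circles). Either way, orbits map to orbits of the same size, so $C$ and the multiset $\{\edges(c_1),\dots,\edges(c_C)\}$ are preserved.

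Combining these observations gives $\mathcal{H}(P_1)=\mathcal{H}(P_2)$. There is no serious obstacle; the only mildly delicate point is bookkeeping the orientation-reversing case, which I would dispatch by noting that reversing cyclic order while simultaneously swapping the roles of successor and predecessor leaves the set of successor-orbits setwise invariant with their sizes unchanged.
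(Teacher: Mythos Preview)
Your proposal is correct and follows essentially the same approach as the paper: verify directly that each entry of the hash tuple is preserved under pd-isomorphism. The paper's own proof is much terser---it simply asserts that $V$, $E$, $F$, $C$ are ``clearly preserved'' and that the unordered multisets of face and component edge-counts survive because an isomorphism can only permute faces and components without changing their sizes---whereas you actually unpack the definitions and track how $\tilde\varphi$ intertwines with the successor map in both the orientation-preserving and orientation-reversing cases; this extra bookkeeping is sound and fills in exactly the details the paper glosses over.
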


\begin{proof}
The numbers $V$, $E$, $F$, and $C$ are clearly preserved by isomorphism. The indices of edges and faces may be permuted by an isomorphism, but the number of edges on each can't change. Thus the \emph{unordered sets} of edge counts for faces and component remain the same as well.
\end{proof}

We can now build up an isomorphism between pdcodes by a series of definitions:

\begin{definition}
Suppose we have two pd-codes $\mathfrak{L}$ and $\mathfrak{L}'$ with the same hash.
\begin{itemize}
\item A bijection $\gamma: \{c_1,\dots,c_C\} \rightarrow \{c'_1,\dots,c'_C\}$ between the components of $\mathfrak{L}$ and the components of $\mathfrak{L}'$ is called \emph{component-length preserving} if $\# \edges(c_i) = \# \edges(\gamma(c_i))$ for all $i$.
\item Given such a component-length preserving bijection $\gamma$, a bijection $\epsilon : \{e_1,\dots,e_E\} \rightarrow \{e'_1,\dots,e'_E\}$ between the edges of $\mathfrak{L}$ and the edges of $\mathfrak{L}'$ is called \emph{component-preserving} and \emph{compatible with $\gamma$} if $\epsilon$ maps the edges of each $c_i$ to the edges of $\gamma(c_i)$ by an element of the dihedral group $D_{\edges(c_i)}$. That is, the edges of $c_i$ are mapped in cyclic (or reverse-cyclic) order to the corresponding edges of $\gamma(c_i)$.
\item Given a component-preserving bijection $\epsilon : \{e_1,\dots,e_E\} \rightarrow \{e'_1,\dots,e'_E\}$ between the edges of $\mathfrak{L}$ and the edges of $\mathfrak{L}'$, we say that a bijection $\nu : \{v_1, \dots, v_V\} \rightarrow \{v'_1,\dots,v'_V\}$ between the vertices of $\mathfrak{L}$ and the vertices of $\mathfrak{L}'$ is \emph{compatible with $\epsilon$} if
\begin{equation*}
\nu(\head(e_i)) = \head(\epsilon(e_i)) \quad \text{ and } \quad \nu(\tail(e_i)) = \tail(\epsilon(e_i))
\end{equation*}
when $e_i$ is part of a component mapped by an orientation-preserving element of the dihedral group, and
\begin{equation*}
\nu(\head(e_i)) = \tail(\epsilon(e_i)) \quad \text{ and } \quad \nu(\tail(e_i)) = \head(\epsilon(e_i))
\end{equation*}
when $e_i$ is part of a component mapped by an orientation-reversing element of the dihedral group.
\item Given $\gamma$, $\epsilon$, and $\nu$ that obey all the above conditions, we say that they are:
\begin{itemize}
 \item \emph{globally orientation-preserving} if the set of edges $e_i, e_j, e_k, e_l$ incident to each vertex $v$ of $\mathfrak{L}$ (in counterclockwise cyclic order) is mapped to the set of edges $\epsilon(e_i), \epsilon(e_j), \epsilon(e_k), \epsilon(e_l)$ incident to $\nu(v)$ in counterclockwise cyclic order.
 \item \emph{globally orientation reversing} if the $\epsilon(e_i), \epsilon(e_j), \epsilon(e_k), \epsilon(e_l)$ are incident to $\nu(v)$ but in clockwise cyclic order (for each $v$),
 \item otherwise, the triple is \emph{inconsistent}.
\end{itemize}
\end{itemize}
\label{def:compatible}
\end{definition}

We then have
\begin{proposition}
Given a pair of pd-codes $\mathfrak{L}$ and $\mathfrak{L}'$ with the same hash, each isomorphism of $\mathfrak{L}$ to $\mathfrak{L}'$ is given by a set of bijections $\gamma$ between their components, $\epsilon$ between their edges, and $\nu$ between their vertices where $\gamma$ is component-length preserving, $\epsilon$ is component-preserving and compatible with $\gamma$, $\nu$ is compatible with $\epsilon$ and the triple is globally orientation preserving or reversing (not inconsistent).
\end{proposition}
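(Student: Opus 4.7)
The plan is to show that every pd-isomorphism $\phi \colon \mathfrak{L} \to \mathfrak{L}'$ decomposes uniquely into a triple $(\gamma, \epsilon, \nu)$ satisfying the four stated conditions, and conversely that each such triple assembles into a pd-isomorphism. From the definition of pd-isomorphism I would take $\nu$ as the underlying bijection of vertices and $\epsilon$ as the bijection of unsigned edges induced by $\phi$ on signed edges (the sign pair $\{+i,-i\}$ is sent to some pair $\{+j,-j\}$ because $\phi$ respects the partition). To produce $\gamma$, I would recall that the components of $\mathfrak{L}$ are the orbits of the strand-continuation structure at each vertex: the cyclic quadruple partitions its four signed edges into two transverse strands (the two pairs of non-adjacent labels, each with opposite signs), and jumping along a strand from vertex to vertex generates a component. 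Since $\phi$ preserves the partition into quadruples and globally preserves or reverses cyclic order, it preserves the decomposition of each quadruple into its two transverse strands, and therefore carries components to components; this induces $\gamma$.

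I would then verify each condition in turn. Component-length preservation for $\gamma$ is immediate, as $\epsilon$ restricts to a bijection on the edge sets of corresponding components. Compatibility of $\epsilon$ with $\gamma$ via a dihedral element follows from tracing the strand-continuation map along each component: the cyclic sequence of edges is carried by $\epsilon$ either to the cyclic sequence along $\gamma(c_i)$ (a rotation in $D_{\edges(c_i)}$) or to its reverse (a reflection), according to whether $\phi$ intertwines the continuation map with itself or with its inverse. Head/tail compatibility of $\nu$ with $\epsilon$ then falls out of the sign convention: the vertex containing $+i$ is by convention the head of edge $i$, so if $\phi$ sends $+i$ to $+\epsilon(i)$ then heads map to heads, while if it sends $+i$ to $-\epsilon(i)$ heads and tails swap; an orientation-preserving dihedral element on a component corresponds exactly to the former case and an orientation-reversing element to the latter. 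The global orientation-preserving or orientation-reversing clause is the remaining part of the pd-isomorphism definition and transfers verbatim, ruling out the ``inconsistent'' case.

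For the converse direction, I would reconstruct $\phi$ on signed edges by setting $\phi(+i) = +\epsilon(i)$ or $\phi(+i) = -\epsilon(i)$ according to whether the dihedral element acting on $i$'s component is orientation-preserving or orientation-reversing (with $\phi(-i) = -\phi(+i)$), and then check that the resulting bijection respects the partition into quadruples and satisfies the global cyclic-order condition. The main obstacle is keeping the three layers of orientation data---the per-component dihedral choice carried by $\epsilon$, the head/tail correspondence enforced on $\nu$, and the global cyclic order at each vertex---in simultaneous agreement, because these are set independently by the definition but must be coordinated by a single underlying $\phi$. Concretely, one must check that the sign flips dictated by orientation-reversing dihedral elements on the individual components combine consistently with a single global choice of preserving or reversing cyclic order at every vertex. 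Once that coordination is laid out as a vertex-by-vertex check, the verifications in both directions reduce to routine local bookkeeping.
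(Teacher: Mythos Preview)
Your proposal is correct and takes the same approach as the paper, namely recognizing that the conditions on $(\gamma,\epsilon,\nu)$ are a repackaging of the definition of pd-isomorphism. The paper's proof is a single sentence to that effect; the careful bookkeeping you lay out---including the coordination issue you flag as the ``main obstacle''---is exactly what is being absorbed into that one line, so your version is more explicit than the paper but not different in substance.
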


\begin{proof}
This is the definition of pd-isomorphic, restated using Definition~\ref{def:compatible}.
\end{proof}

A few other observations are helpful:
\begin{lemma}
If $\mathcal{H}(\mathfrak{L}) = \mathcal{H}(\mathfrak{L}')$ for pdcodes $\mathfrak{L}$ and $\mathfrak{L}'$, there is at least one component-length preserving $\gamma : \{c_1,\dots,c_C\} \rightarrow \{c'_1,\dots,c'_C\}$. All component-length preserving $\gamma$ can be generated by iterating over a product of permutation groups.
\end{lemma}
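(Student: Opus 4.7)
The plan is to use the equality of hashes to partition each pd-code's components according to their edge counts and then match up these partition blocks. Since $\mathcal{H}(\mathfrak{L}) = \mathcal{H}(\mathfrak{L}')$, in particular the unordered multisets $\{\edges(c_1),\dots,\edges(c_C)\}$ and $\{\edges(c'_1),\dots,\edges(c'_C)\}$ are equal. Let $n_1 < n_2 < \cdots < n_k$ be the distinct values appearing in these multisets, and for each $j$ let $m_j$ be the number of components of $\mathfrak{L}$ with exactly $n_j$ edges. By hash equality, $\mathfrak{L}'$ has exactly $m_j$ components with $n_j$ edges as well.

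Next, I would observe that a component-length preserving bijection $\gamma$ is, by definition, one that maps each component of $\mathfrak{L}$ to a component of $\mathfrak{L}'$ with the same edge count. Therefore $\gamma$ must send the set $S_j \subseteq \{c_1,\dots,c_C\}$ of components of $\mathfrak{L}$ with $n_j$ edges bijectively onto the corresponding set $S'_j \subseteq \{c'_1,\dots,c'_C\}$ in $\mathfrak{L}'$, and there are no constraints linking the choice of bijection $S_j \to S'_j$ to the choice made for $S_{j'}$ when $j \ne j'$. This gives an explicit identification
\begin{equation*}
\{\text{component-length preserving } \gamma\} \;\longleftrightarrow\; \prod_{j=1}^{k} \mathrm{Bij}(S_j, S'_j),
\end{equation*}
and each $\mathrm{Bij}(S_j, S'_j)$ is, after an arbitrary fixed indexing of $S_j$ and $S'_j$, in bijection with the symmetric group $S_{m_j}$.

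For existence, note that each factor $S_{m_j}$ is nonempty (it contains the identity permutation), so the product is nonempty and at least one component-length preserving $\gamma$ exists. For enumeration, one iterates over the elements of the product $S_{m_1} \times S_{m_2} \times \cdots \times S_{m_k}$ in the usual lexicographic nested-loop fashion, at each step assembling the corresponding $\gamma$ from its restrictions to the blocks $S_j$.

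There is no real obstacle here; the statement is essentially a bookkeeping consequence of the fact that a bijection respecting a partition is the same thing as a tuple of bijections between the matched blocks. The only mild care needed is to handle the case where some $n_j$ is not realized (then $m_j = 0$ and the corresponding factor is the trivial group) and to make explicit the indexing convention used to identify $\mathrm{Bij}(S_j, S'_j)$ with $S_{m_j}$, both of which are straightforward.
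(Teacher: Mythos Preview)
Your argument is correct and is exactly the intended reasoning: the paper states this lemma without proof, treating it as self-evident, and your partition-by-edge-count argument is the natural way to unpack it. There is nothing to add.
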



\begin{lemma}
Given a component-length preserving $\gamma$ and component-preserving and compatible $\epsilon$, and a set of orientations for the components of $\mathfrak{L}$, there is at most one $\nu : \{v_1, \dots, v_V\} \rightarrow \{v'_1,\dots,v'_V\}$ which is compatible with $\epsilon$ and consistently oriented on components and we can construct $\nu$ as below.
\end{lemma}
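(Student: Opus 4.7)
The plan is to exploit the fact that the compatibility condition of Definition~\ref{def:compatible} forces the image $\nu(v)$ of every vertex $v$ through head/tail relations with its incident edges. Since each vertex of a pd-code is incident to four signed edges (the entries of its quadruple) and $\epsilon$ together with the orientation data is already specified, essentially no freedom remains in defining $\nu$. This immediately delivers uniqueness, and it also tells us exactly how to construct the candidate $\nu$.

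For the ``at most one'' claim, fix any vertex $v$ of $\mathfrak{L}$ and choose any signed edge $e$ incident to $v$, so that either $v = \head(e)$ or $v = \tail(e)$. Let $c$ be the component of $\mathfrak{L}$ containing $e$; the chosen orientation of $c$ determines whether $\epsilon$ acts orientation-preservingly or orientation-reversingly on $c$, and Definition~\ref{def:compatible} then forces
\[
\nu(v) \;=\; \head(\epsilon(e)) \quad \text{or} \quad \nu(v) \;=\; \tail(\epsilon(e)),
\]
with the choice dictated by the orientation and by whether $v$ is the head or tail of $e$. In either case $\nu(v)$ is completely pinned down by the data $(\gamma,\epsilon,\text{orientations})$. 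Since this holds for every vertex independently, any compatible $\nu$ is unique.

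For the construction, I would iterate over the vertices of $\mathfrak{L}$ and, at each $v$, choose any one incident signed edge and read off the forced value of $\nu(v)$ from the formula above. The main obstacle is \emph{well-definedness}: each vertex of a 4-valent pd-code has four incident signed edges (possibly with multiplicity, in the case of loops), and each of these independently imposes a candidate value for $\nu(v)$. These four candidate values must all coincide, or else no $\nu$ compatible with the given $(\gamma,\epsilon,\text{orientations})$ exists. The construction therefore incorporates a consistency check: compute the value forced by one incident edge, then verify that the other three agree; if they do not, report that no valid $\nu$ exists for this choice, and if they do, accept the resulting $\nu$. Verifying this consistency is the crux of the algorithm.

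Finally, one checks that a well-defined $\nu$ built in this way is automatically a bijection. Injectivity is immediate: distinct vertices of $\mathfrak{L}$ are heads of distinct signed edges, and $\epsilon$ is a bijection of edges sending heads to either heads or tails in $\mathfrak{L}'$, so the images are distinct. Surjectivity then follows by counting, since $\mathcal{H}(\mathfrak{L})=\mathcal{H}(\mathfrak{L}')$ implies $V = V'$. This yields the vertex bijection $\nu$ whose construction the lemma refers to.
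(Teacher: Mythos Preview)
Your proposal is correct and follows essentially the same approach as the paper: both arguments observe that compatibility with $\epsilon$ forces $\nu(v)$ to equal the head or tail (as dictated by orientation) of $\epsilon(e)$ for each of the four signed edges incident to $v$, so $\nu(v)$ is determined if and only if these four forced values agree. The paper's proof stops there; your added paragraph on bijectivity goes slightly beyond what the paper verifies (and your injectivity sentence could be tightened---the cleanest argument is that the four signed edges at $v$ and the four at $w$ are disjoint, hence map under $\epsilon$ to eight distinct signed edges that cannot all be incident to a single vertex of $\mathfrak{L}'$), but the core reasoning is identical.
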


\begin{proof}
Each vertex $v$ of $\mathfrak{L}$ is incident to four edges $e_i, e_j, e_k, e_l$. Without loss of generality, assume that $v = \tail(e_i)$, $\tail(e_j)$, $\head(e_k)$ and $\head(e_l)$. Then if $\nu$ is compatible with $\epsilon$, we must have
\begin{equation*}
\nu(v) = \tail(\epsilon(e_i)) = \tail(\epsilon(e_j)) = \head(\epsilon(e_k)) = \head(\epsilon(e_l)).
\end{equation*}
If the four terms on the right are equal, this defines $\nu(v)$. If not, there is no compatible $\nu$.
\end{proof}

We can now find all isomorphisms between two pdcodes computationally by a simple brute-force strategy:

\begin{algorithmic}
\Procedure{BuildIsomorphisms}{$\mathfrak{L}$,$\mathfrak{L}'$}
\Comment{Build isomorphisms between pdcodes $\mathfrak{L}$ and $\mathfrak{L}'$}

\If {the hashes $\mathcal{H}(\mathfrak{L})$ and $\mathcal{H}(\mathfrak{L}')$ are different}
	\State $P$ and $P'$ are not isomorphic. Return $\emptyset$.
\EndIf

\ForAll{component-length preserving $\gamma: \{c_1,\dots, c_C\} \rightarrow \{c'_1, \dots, c'_C\}$}
	\ForAll{compatible and component-preserving $\epsilon$}
		\If{a compatible $\nu$ exists}
		\If{$\nu$ is globally orientation preserving or reversing}
			\State $\epsilon$, $\nu$ define an isomorphism $\mathfrak{L} \rightarrow \mathfrak{L}'$
		\EndIf
		\EndIf
	\EndFor
\EndFor
\EndProcedure
\end{algorithmic}



\section{Proving Proposition~\ref{prop:reduce}}
\label{app:reduction proof}

In this section, we give the proof of

\setcounter{theorem}{16}

\begin{proposition}
Every link shadow $L$ can be obtained from a connected, embedded planar simple graph of vertex degree $\leq 4$ $G_0$ by a series of $\loopinsert$, $\edgedouble$, $\cutedgedouble$, and $\pairinsert$ expansions.

Equivalently, any link shadow $L$ can be reduced to a connected embedded planar simple graph $L_0$ of vertex degree $\leq 4$ by a series of $\loopinsert$, $\edgedouble$, $\cutedgedouble$, and $\pairinsert$ reductions. The embedded isomorphism type of $L_0$ is determined uniquely by the (unoriented) shadow isomorphism type of $L$ (the order in which the reductions are performed doesn't matter).
\end{proposition}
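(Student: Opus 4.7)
The plan is to prove existence of the reduction sequence by strong induction on the number of edges, and then deduce uniqueness from a diamond-lemma style local confluence argument. Viewing $L$ as a $4$-regular embedded planar multigraph, one performs the inverse moves $\loopinsert^{-1}, \edgedouble^{-1}, \cutedgedouble^{-1}, \pairinsert^{-1}$ while preserving the vertex set and the planar embedding, stopping once the current graph is simple. Since every reduction strictly decreases the edge count, termination is automatic; the real work is to check that when the running graph is not yet simple at least one reduction always applies, and that the resulting normal form does not depend on choices.

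For the existence step I would carry out a three-way case split on the nearest obstruction to simplicity. If the current graph $H$ has a loop at a vertex $v$, then $v$ has degree at least $3$ and removing the loop drops the degree to $1$ or $2$, matching the output of $\loopinsert$, so $\loopinsert^{-1}$ applies. If $H$ has no loops but contains a bigon face bounded by a parallel pair $e_1, e_2$ between vertices $u$ and $v$, then deleting one of $e_1, e_2$ leaves the other either still non-cut or newly-cut: the first case reverses an $\edgedouble$ expansion and the second reverses a $\cutedgedouble$ expansion. The degree hypothesis ($u$ and $v$ end up of degree $<4$) holds because the running maximum degree is always at most $4$. Finally, if $H$ has no loops and no bigons but does have a pair of edges $e_1, e_2$ between $u$ and $v$, then the absence of bigons forces $e_1, e_2$ to be cyclically opposite at both $u$ and $v$; by $4$-regularity this in turn forces $u$ and $v$ to have degree exactly $4$, since otherwise only three incident edges would be present at an endpoint and some pair of parallel edges would have to be cyclically adjacent, contradicting the no-bigons hypothesis. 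A short face-tracing argument in the planar embedding then shows that removing $e_1$ and $e_2$ leaves $u$ and $v$ on exactly the same pair of faces, matching the input side of $\pairinsert$, so $\pairinsert^{-1}$ applies.

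For uniqueness I would invoke Newman's lemma: termination is immediate, so it suffices to verify local confluence. Given any two simultaneously applicable reductions, if they act on edge-disjoint features they commute trivially; if they share an edge or vertex the conflict sits in a bounded neighborhood, and a direct case analysis enumerating the relevant small configurations (two bigons sharing an edge, a loop on a vertex of a bigon, a non-bigon multi-edge interacting with a nearby loop, etc.) produces a common descendant up to embedded isomorphism. Newman's lemma then delivers a unique normal form for $L$, which is the promised simple graph $L_0$.

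The main obstacle I anticipate is the face-tracing verification in the third existence case: showing that the two cyclically opposite parallel edges between degree-$4$ vertices $u$ and $v$ always sit in exactly the configuration required by $\pairinsert$. This relies on a careful correspondence between the faces flanking $e_1$ and $e_2$ as seen from $u$ and as seen from $v$, and uses planarity in an essential way. The local confluence step is tedious but elementary once the finite list of overlapping local patterns has been enumerated.
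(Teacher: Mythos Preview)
Your existence argument has a genuine gap: the trichotomy ``loop / bigon / cyclically-opposite parallel pair'' is not exhaustive. The missing case is exactly the one that $\cutedgedouble$ is designed for. Two parallel edges $e_1,e_2$ between $u$ and $v$ can be cyclically \emph{adjacent} at both endpoints and still fail to bound a bigon: this happens precisely when the counterclockwise order is the same at both ends (say $e_1$ then $e_2$ at $u$, and again $e_1$ then $e_2$ at $v$) rather than reversed. In that configuration the $2$-cycle $e_1\cup e_2$ separates the sphere, with the remaining edges at $u$ on one side and those at $v$ on the other; removing one copy leaves a cut edge, and this reverses $\cutedgedouble$, not $\edgedouble$ or $\pairinsert$. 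Your Case~2 does not cover it (there is no bigon), and your Case~3 explicitly excludes it by asserting that non-bigon parallel edges must be cyclically opposite --- which is false.

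Relatedly, your description of $\cutedgedouble$ in Case~2 is a misreading of the move. The $\cutedgedouble$ expansion does \emph{not} produce a bigon; it duplicates a cut edge while preserving (not reversing) the cyclic order, yielding the separating $2$-cycle just described. So ``remove a bigon edge and find the survivor is a cut edge'' is still an $\edgedouble^{-1}$, not an $\cutedgedouble^{-1}$. Finally, your appeal to ``$4$-regularity'' in Case~3 to force both endpoints to have degree $4$ only applies to the original shadow, not to the intermediate graph $H$ after earlier reductions; once degrees drop, even more mixed configurations appear (e.g.\ a parallel pair adjacent at one endpoint and non-adjacent at the other).

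The paper's existence proof avoids these pitfalls by working relative to the fixed $4$ connection points inherited from the original $4$-regular $L$ and doing a case analysis on which of the four positions at each endpoint carry the parallel copies; parity rules out half the cases and the survivors are matched one-to-one with $\edgedouble^{-1}$, $\cutedgedouble^{-1}$, and $\pairinsert^{-1}$. For uniqueness the paper does not use local confluence at all: it simply observes that whatever order you choose, the end result is $L$ with every loop deleted and every multi-edge collapsed to a single edge, and that the cyclic order of the surviving edges at each vertex is already determined by $L$. That is a one-paragraph argument, considerably lighter than a Newman's-lemma case enumeration.
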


\begin{proof}
We will prove the second statement, reducing in stages from some $G_n = G$ to $G_0$ by performing one reduction at each step. The number of steps we can perform is clearly finite, since each reduces the number of edges by at least one. So suppose we are at stage $G_i$. If there are no loop or multiple edges, we're done, and this is the simple graph $G_0$.

If there is a loop edge, we can remove it with a $\loopinsert$ move.

If there is a multiple edge, we must consider several cases. We can think of each vertex of $G_i$ as retaining a list of 4 connection points, ordered counterclockwise, from the initial embedding of $G$. Since we have performed some reductions already, some of these may be empty, but at least two are filled at each end of the multiple edge. Pick one vertex of the multiple edge and call it $v$ and the other vertex $w$.

If the edge multiplicity is four, $G$ is \hopfgraph. This is obtained from the graph with one edge and two vertices by three $\edgedouble$ moves.

If the edge multiplicity is three or two, there is at least one connection point on $v$ which is not occupied by a copy of the multiple edge followed immediately by a connection point which is occupied by a copy $e$ of the multiple edge. Without loss of generality, we'll call $e$ the \emph{base copy} of the multiple edge, and its connection point to at $v$ position $0$ around $v$. The remaining connection points will be numbered $1$, $2$, and $3$. By construction, the edge joined to $v$ at position $3$ (if any) is not connected to $w$. We can label the other end of the base copy $e$ position $a$ on the second vertex $w$, and label the other positions $b$, $c$, and $d$, again counterclockwise.

If the edge multiplicity is three, only one of these positions is unoccupied by a copy of the multiple edge. Looking at the three cases (shown in Figure \ref{fig:MultThree}), we can see that by parity, it must be position $b$, and the pair of copies $0a$ and $2c$ of the multiple edge can be removed by a $\pairinsert$ operation. We have now disposed of the case where edge multiplicity is three.
\begin{figure}[H]
\begin{center}
\includegraphics[width=4in]{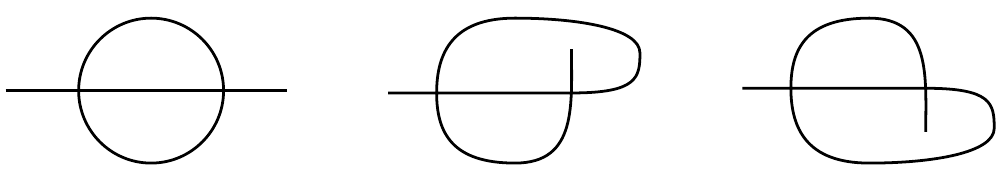}
\end{center}
\caption{By parity, because we came from a 4-regular embedded planar graph only the leftmost case can occur at any stage in the reduction process.}
\label{fig:MultThree}
\end{figure}

If edge multiplicity is two, there is one edge unaccounted for, which joins either position $1$ or $2$ on vertex $v$ to position $b$, $c$, or $d$ on vertex $w$. Therefore, there are six cases to address. We consider them in order, starting with the $1x$ configurations.
\begin{itemize}
\item
\begin{tabular}{m{1in}m{3in}m{1in}}
\begin{overpic}[width=1in]{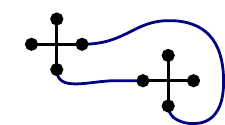}
	\put(23,52){\tiny{$2$}}
	\put(10,40){\tiny{$3$}}
	\put(35,40){\tiny{$1$}}
	\put(17,17){\tiny{$0$}}

	\put(73,36){\tiny{$d$}}
	\put(62,24){\tiny{$a$}}
	\put(85,24){\tiny{$c$}}
	\put(67,4){\tiny{$b$}}
\end{overpic}
&
In the $1b$ configuration, the multiple edge forms a 2-cycle dividing the portion of the graph $G$ connected to $cd$ from the portion connected to $23$. Deleting $1b$ requires a $\cutedgedouble$ move, and the remaining base edge is a cut edge of all further-reduced $G_i$, as shown at right.
&
\begin{overpic}[width=1in]{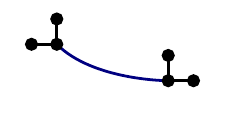}
	\put(23,52){\tiny{$2$}}
	\put(11.5,40.5){\tiny{$3$}}

	\put(73,35){\tiny{$d$}}
	\put(84.5,24){\tiny{$c$}}
\end{overpic}
\end{tabular}
\item
\begin{tabular}{m{1in}m{3in}}
\begin{overpic}[width=1in]{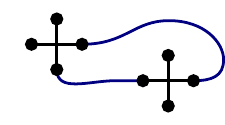}
	\put(23,52){\tiny{$2$}}
	\put(10,40){\tiny{$3$}}
	\put(35,40){\tiny{$1$}}
	\put(17,17){\tiny{$0$}}

	\put(73,36){\tiny{$d$}}
	\put(62,24){\tiny{$a$}}
	\put(85,24){\tiny{$c$}}
	\put(67,4){\tiny{$b$}}
\end{overpic}
&
The $1c$ configuration is forbidden by parity.
\end{tabular}
\item
\begin{tabular}{m{1in}m{3in}m{1in}}
\begin{overpic}[width=0.9in]{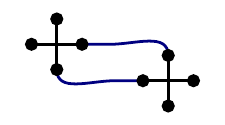}
	\put(23,52){\tiny{$2$}}
	\put(10,40){\tiny{$3$}}
	\put(35,40){\tiny{$1$}}
	\put(17,17){\tiny{$0$}}

	\put(73.5,36){\tiny{$d$}}
	\put(62,24){\tiny{$a$}}
	\put(85,24){\tiny{$c$}}
	\put(66,3){\tiny{$b$}}
\end{overpic}
&
In the $1d$ configuration, the multiple edge forms a bigon face. Deleting $1d$ uses an $\edgedouble$ reduction, and yields the configuration at right. The remaining base edge may or may not be a cut edge of the further $G_i$.
&
\begin{overpic}[width=0.9in]{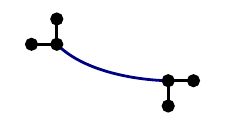}
	\put(23,52){\tiny{$2$}}
	\put(11.5,40.5){\tiny{$3$}}

	\put(73,-2){\tiny{$b$}}
	\put(84.5,24){\tiny{$c$}}
\end{overpic}
\end{tabular}
\end{itemize}
One might think that the $2-$ configurations are simply rearrangements of those above, but this is not true. A genuinely new case arises for $2c$.
\begin{itemize}
\item
\begin{tabular}{m{1in}m{3in}}
\begin{overpic}[width=0.9in]{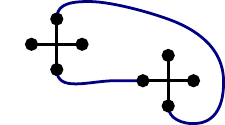}
	\put(20,52){\tiny{$2$}}
	\put(10,40){\tiny{$3$}}
	\put(35,40){\tiny{$1$}}
	\put(17,17){\tiny{$0$}}

	\put(73,36){\tiny{$d$}}
	\put(62,24){\tiny{$a$}}
	\put(85,24){\tiny{$c$}}
	\put(66,3){\tiny{$b$}}
\end{overpic}
&
The $2b$ configuration is forbidden by parity.
\end{tabular}
\item
\begin{tabular}{m{1in}m{3in}m{1in}}
\begin{overpic}[width=.9in]{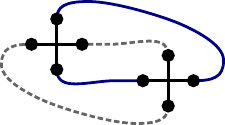}
	\put(20,52){\tiny{$2$}}
	\put(10,40){\tiny{$3$}}
	\put(35,40){\tiny{$1$}}
	\put(17,17){\tiny{$0$}}

	\put(74,36){\tiny{$d$}}
	\put(62,24){\tiny{$a$}}
	\put(85,24){\tiny{$c$}}
	\put(66,3){\tiny{$b$}}
\end{overpic}
&
In the $2c$ configuration, by parity, the graph $G$ must have connected $1$ and $d$ and also $3$ and $b$. None of our moves change the connectivity of the graph (because we never delete all copies of a multiple edge), so the current graph $G_i$ still joins these pairs of connection points. This means that we are in position for an $\pairinsert$ pair reduction, resulting in the graph at right.
&
\begin{overpic}[width=.9in]{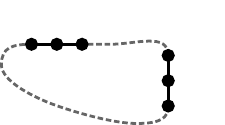}
	\put(11,40){\tiny{$3$}}
	\put(34,40){\tiny{$1$}}

	\put(79,30){\tiny{$d$}}
	\put(79,6){\tiny{$b$}}
\end{overpic}
\end{tabular}
\item
\begin{tabular}{m{1in}m{3in}}
\begin{overpic}[width=0.9in]{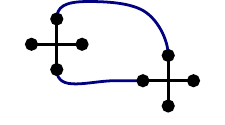}
	\put(20,52){\tiny{$2$}}
	\put(10,40){\tiny{$3$}}
	\put(35,40){\tiny{$1$}}
	\put(17,17){\tiny{$0$}}

	\put(77,36){\tiny{$d$}}
	\put(62,24){\tiny{$a$}}
	\put(85,24){\tiny{$c$}}
	\put(66,3){\tiny{$b$}}
\end{overpic}
&
The $2d$ configuration is forbidden by parity.
\end{tabular}
\end{itemize}
Along the way, our analysis has been entirely local: we need only consider a single vertex to decide whether we can apply an $\loopinsert$ reduction and a pair of vertices to decide on $\edgedouble$, $\cutedgedouble$, and $\pairinsert$ operations. To show that order of operations doesn't matter, we need to show that whether or not we can apply these operations does not depend on which reductions have already been performed. First, we note that since we never remove all copies of a multiple edge, we never change the connectivity of the graph during the reduction process.

The three copies of a multiplicity three edge must bound two bigons, and this does not change as we reduce other edges. Therefore, the $\pairinsert$ move is always available for all multiplicity three edges.

Whether a multiplicity two edge is eligible for an $\edgedouble$ move depends only on the positions of the ends of the multiple copies on their vertices, which don't change as we reduce. Therefore, this operation can always be performed (or is always forbidden), regardless of which reductions have already been performed.

Whether a multiplicity two edge is eligible for a $\cutedgedouble$ or $\pairinsert$ operation depends not only on the positions of ends of edges on their vertices, but also on the connectivity of the (reduced) graph. However, as we noted above, the connectivity of the graph doesn't change as we perform reductions.

It is clear that the isomorphism type of $G_0$ does not depend on the order of reduction-- after all, in the end we are simply reducing the multiplicity of multiple edges of the graph.

It takes only a moment longer to realize that the embedding of $G_0$ is determined as well-- this embedding is determined by the cyclic order of (surviving) edges around their vertices. We will have deleted some edges from many of these vertices by the time we reach $G_0$, potentially leaving many empty connections. However, the cyclic order of the surviving edges won't be affected by the order in which these connections were emptied.

One might worry that the choice of \emph{which}\footnote{Remember that the choice of ``base edge'' was arbitrary.} copy of an edge of multiplicity two to delete could affect the embedded isomorphism type after an $\edgedouble$ or $\cutedgedouble$ reduction, but it's easy to check that the two possible reduced configurations are (embedded) graph isomorphic by looking at the pictures above. Formally, the point is that the two copies of the edge are adjacent in the cyclic ordering of edges at each vertex, so the surviving copy is always in the same cyclic position relative to surviving edges incident to the vertex.
\end{proof}

\section{Branch and Bound Algorithm for Expansions}
\label{app:branch and bound}

Proposition~\ref{prop:reduce} suggests a strategy for generating diagrams: start by enumerating embedded planar simple graphs of vertex degree $\leq 4$ using \plantri, then expand them to $4$-regular embedded planar graphs using the moves above. We can generate embedded isomorphic graphs with different expansion sequences, so we will have to sort the graphs into isomorphism classes. We start with an embedded planar graph of vertex degree $\leq 4$.

\begin{lemma}
If $G_0$ is obtained from a $4$-regular embedded planar multigraph $G$ by the reduction process of Proposition~\ref{prop:reduce} then either every vertex of degree one in $G_0$ has exactly one loop edge in $G$ and one multiedge of multiplicity two obtained by $\edgedouble$ or $\cutedgedouble$ or the graph is \hopfgraph.
\label{lem:degreeone}
\end{lemma}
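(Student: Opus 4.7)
The plan is to analyze the structure of $G$ near a given degree-one vertex $v$ of $G_0$ by a degree-counting argument, classifying the edges of $G$ incident to $v$ according to how they can be eliminated by the reductions of \prop{reduce}. Since $G$ is $4$-regular, $v$ has degree $4$ in $G$; since the reduction process never adds edges or vertices, the unique edge at $v$ in $G_0$ came from $G$, identifying a distinguished neighbor $x_0$. Every edge of $G$ incident to $v$ is therefore either a loop at $v$, a copy of the $v$-$x_0$ multiedge (of some multiplicity $m_0 \geq 1$), or a copy of some $v$-$y$ multiedge for a neighbor $y \neq x_0$ (of multiplicity $m_y \geq 1$). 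Writing $L$ for the number of loops at $v$, the degree budget is
\[
2L + m_0 + \sum_{y \neq x_0} m_y = 4.
\]

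Next I would match each edge type at $v$ against the reductions that can act on it, following the case analysis in the proof of \prop{reduce}. Loops are removed only by $\loopinsert$. The $v$-$x_0$ multiedge has to be reduced to exactly one surviving copy, which can be done by $\edgedouble$, $\cutedgedouble$, or the multiplicity-three use of $\pairinsert$; each of these leaves at least one copy. Crucially, for any $v$-$y$ multiedge with $y \neq x_0$ to be absent from $G_0$ altogether, it must be eliminated entirely. Since $\edgedouble$ and $\cutedgedouble$ only bring multiplicity down to one (never to zero), and the multiplicity-three case of $\pairinsert$ also leaves a copy behind, the only route to total elimination is a $\pairinsert$ reduction applied to a $v$-$y$ multiedge of multiplicity exactly two in the ``$2c$''-configuration described in the proof of \prop{reduce}.

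The main obstacle is to show this last route is unavailable at $v$ (unless $G$ is already \hopfgraph). The key observation is that the $2c$-configuration at a multiplicity-two $v$-$y$ multiedge requires both of the remaining edge-slots at $v$ to host edges going directly to $y$; but in our situation those two slots are occupied by copies of the $v$-$x_0$ multiedge, whose other endpoint is $x_0 \neq y$, so the configuration fails. Furthermore, $\pairinsert$ at $v$ requires $v$ to be $4$-valent at the moment of reduction, so once any other reduction has touched $v$, no subsequent $\pairinsert$ at $v$ is possible. This forces $m_y = 0$ for every $y \neq x_0$.

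The degree budget then collapses to $2L + m_0 = 4$ with $m_0 \geq 1$, whose only integer solutions are $(L, m_0) = (1, 2)$ and $(0, 4)$. The first case gives the lemma's conclusion: one loop (removed by $\loopinsert$) together with a multiplicity-two $v$-$x_0$ multiedge, which must be reduced by $\edgedouble$ or $\cutedgedouble$ since $\pairinsert$ is not available at $v$ (equivalently, in the expansion direction it is obtained by $\edgedouble$ or $\cutedgedouble$). In the second case all four of $v$'s slots are filled by copies of a single $v$-$x_0$ multiedge; the same analysis applied to $x_0$ shows $x_0$ also has degree one in $G_0$ with $v$ as its only neighbor, and connectedness of $G_0$ then forces $G_0$ to consist of a single edge, so $G = \hopfgraph$.
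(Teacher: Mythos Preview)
Your reduction-direction approach with the degree budget $2L + m_0 + \sum_y m_y = 4$ is sound in outline, but the step that rules out a second neighbour $y \neq x_0$ contains a real error. You claim the $2c$-configuration ``requires both of the remaining edge-slots at $v$ to host edges going directly to $y$.'' That is a misreading of the $2c$ case in the proof of \prop{reduce}: the condition there is only that slot~$1$ at $v$ lies in the same planar region as slot~$d$ at $y$ (and likewise slot~$3$ with slot~$b$), i.e.\ that they are connected through the rest of $G$, not that there is a direct edge between them. So nothing prevents slots~$1$ and~$3$ at $v$ from carrying the $v$--$x_0$ edges while the $2c$ condition still holds, and your contradiction does not fire. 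Your auxiliary observation that a $\pairinsert$ reduction requires $v$ to be $4$-valent is correct, but it only tells you that this reduction must be performed first; by itself that is not a contradiction.

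The paper's proof sidesteps all of this by working in the expansion direction: from the degree-one vertex $v$ in $G_0$ three empty connection slots must be filled, and the moves available fill them either entirely by redoubling the $v$--$x_0$ edge (forcing $x_0$ to be a leaf as well, hence $G=\hopfgraph$) or by one loop plus one doubling. The reason $\pairinsert$ does not provide a third option---the point your argument is missing---is that after a single doubling at the leaf $v$, one of the two faces incident to $v$ is a bigon whose only vertices are $v$ and $x_0$; any $\pairinsert$ partner for $v$ must lie on that bigon and so must equal $x_0$, which folds the case back into redoubling. You can repair your reduction-direction proof by importing this bigon constraint in place of the incorrect $2c$ claim: after the $\pairinsert$ reduction removes the two $v$--$y$ edges, $v$ has degree two with both remaining edges going to $x_0$, and the same bigon obstruction shows $y$ cannot share both faces with $v$ unless $y=x_0$.
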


\begin{proof}
If we expand $G_0$ to $G$ using the four moves, three empty connections on the vertex must be filled during the process. If they are filled by redoubling the existing edge, then the degree of the vertex at the other end of the edge was also one, and we get \hopfgraph. Otherwise, we must fill two by adding a loop edge, and the other by doubling the existing edge.
\end{proof}

We also observe that two pairs of vertices $ab$ and $cd$ on the unit circle may be joined by nonintersecting chords inside the circle if and only if the pairs are unlinked on the circle (\raisebox{-0.032in}{\includegraphics[width=0.15in]{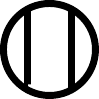}}
instead of \raisebox{-0.032in}{\includegraphics[width=0.15in]{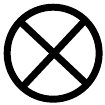}}). This happens when the order of the vertices around the circle is $abcd$ instead of $acbd$ or $adbc$.

We can now design an algorithm to produce all possible expansions of $G_0$, a given connected embedded planar simple graph of vertex degree $\leq 4$ as an integer constraint satisfaction problem. By Lemma~\ref{lem:degreeone}, we must add a loop to each vertex of degree one in $G_0$ eventually. We can save time by doing so at the start of the computation. We will therefore assume that loops have been added to create a \emph{prepared} graph $G_1$, and each vertex has degree $2$, $3$, or $4$.

We will now define four classes of variables:
\begin{itemize}
\item $l_{i}$ for every vertex $v_i$ of degree $2$
\item $d_{i,j}$ for every non-cut edge $e_{ij}$ in the graph joining vertices of degree $<4$.
\item $c_{i,j}$ for every cut edge $e_{ij}$ joining vertices of degree $<4$.
\item $p_{i,j}$ for every pair of vertices $v_{i}$, $v_j$ which both have degree 2 and are both on two different faces of the embedding
\end{itemize}
We take the subscripts to be unordered. That is, $d_{4,17}$ and $d_{17,4}$ are the same variable, since the edges $e_{4,17}$ and $e_{17,4}$ are the same edge.

These variables will all take the values $1$ or $0$, which represent the presence or absence of $\loopinsert$ loop edges, $\edgedouble$ or $\cutedgedouble$ doubles of existing edges, and $\pairinsert$ insertions of new pairs of edges.
We can now define two sets of equations relating these variables.

\begin{definition}
We define the \emph{vertex degree equations} for a prepared graph $G_1$ to be the collection of equations indexed by the vertices of $G_1$ given below. For each vertex index $i$ of degree $\delta(i)$
\begin{equation*}
\delta(i) + 2 l_i + \sum_j d_{i,j} + \sum_j c_{i,j} + 2 \sum_j p_{i,j} = 4
\end{equation*}
where the sums are taken over all $j$ for which the appropriate variables exist. These equations express the fact that in a complete expansion, the vertex degrees must all be four.
\end{definition}

The pair variables $p_{i,j}$ satisfy an additional set of equations:
\begin{definition}
For each $p_{i,j}$ and $p_{k,l}$ so that the vertices $v_i, v_j, v_k$ and $v_l$ are on the same pairs of faces, and so that the vertices are in the (cyclic) order $v_i, v_k, v_j, v_l$ or $v_i, v_l, v_j, v_k$ we have an additional \emph{linking equation}
\begin{equation*}
p_{i,j} + p_{k,l} \leq 1
\end{equation*}
\end{definition}

These equations express the fact that the edges corresponding to a linked pair of endpoints along a face must intersect inside the face. Therefore, if two pair variables are linked, at most one of them can take the value $1$. For instance, in the situation shown in Figure~\ref{fig:pair linking equation} there are four vertices of degree two along a pair of faces, we have six pair variables, two of which obey an additional linking equation.

\begin{figure}[H]
\begin{center}
	\begin{overpic}[height=1.7in]{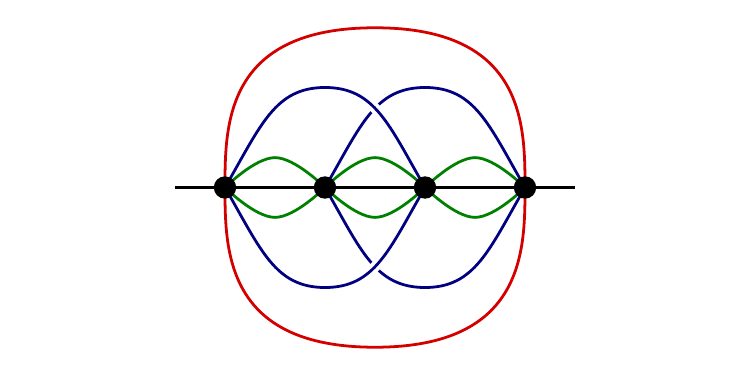}
	\put(27,27){$a$}
	\put(42,27){$b$}
	\put(56,27){$c$}
	\put(71,27){$d$}

	\put(48,48){{$p_{ad}$}}
	\put(48,0.5){{$p_{ad}$}}

	\put(41.5,40){{$p_{ac}$}}
	\put(41.5,8.5){{$p_{ac}$}}

	\put(55,40){{$p_{bd}$}}
	\put(55,8.5){{$p_{bd}$}}

	\put(35,30.5){{$p_{ab}$}}
	\put(35,18.5){{$p_{ab}$}}

	\put(47.9,30.5){{$p_{bc}$}}
	\put(47.9,18.5){{$p_{bc}$}}

	\put(60,30.5){{$p_{cd}$}}
	\put(60,18.5){{$p_{cd}$}}
	\end{overpic}
\end{center}
\caption{Suppose that the four vertices $a$, $b$, $c$ and $d$ are on a pair of different faces of the graph. There are six potential edges connecting these vertices. The variables $p_{ab}$, $p_{bc}$, $p_{cd}$ and $p_{da}$ obey only vertex degree equations. But the variables $p_{ac}$ and $p_{bd}$ obey both vertex degree equations and the additional pair linking equation $p_{ac} + p_{bd} \leq 1$.}
\label{fig:pair linking equation}
\end{figure}

\begin{proposition}
Every assignment of $\{0,1\}$ to the variables $l_i$, $d_{i,j}$, $c_{i,j}$, and $p_{i,j}$ which obeys the vertex degree equations and linking equations corresponds to an expansion of the connected planar graph $G_1$ (with vertex degrees $2$, $3$, and $4$ and loop edges only) to a collection of embeddings for the connected planar $4$-regular multigraph $G$.
\end{proposition}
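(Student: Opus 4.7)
The plan is to verify three things in order: (i) every valid $\{0,1\}$ assignment can be realized as a sequence of $E_1, E_2, E_3, E_4$ expansions applied to $G_1$; (ii) each resulting multigraph is connected and $4$-regular; and (iii) every such multigraph admits at least one planar embedding, and every planar embedding arising from some sequence of expansions is enumerated as we range over local placement choices. I will interpret the variables as instructions: $l_i = 1$ means apply $\loopinsert$ at $v_i$; $d_{i,j}=1$ means apply $\edgedouble$ to the non-cut edge $e_{ij}$; $c_{i,j} = 1$ means apply $\cutedgedouble$ to the cut edge $e_{ij}$; and $p_{i,j}=1$ means apply $\pairinsert$ between $v_i$ and $v_j$.

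For (ii), fix a vertex $v_i$ of degree $\delta(i)$ in $G_1$. Each of the four operations contributes a known number of new edge-ends at $v_i$: a loop adds $2$, each doubled incident edge (cut or not) adds $1$, and each pair insertion adds $2$. The vertex degree equation therefore says precisely that after performing all the instructions the degree at $v_i$ is exactly $4$. Since $G_1$ is connected and we only add edges, connectivity is preserved. Note that the operations commute in the sense that their effects on the vertex-end counts are additive, so the order in which we apply them does not change the underlying multigraph.

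For (iii) the local operations $E_1$, $E_2$, $E_3$ are patently planar: a loop at $v_i$ can be drawn in either unused corner at $v_i$, and edge doublings ($E_2$, $E_3$) produce either a bigon face or a cut-edge replication, both realizable in the plane. The main obstacle is the $E_4$ pair insertions, which are genuinely nonlocal: each $p_{i,j} = 1$ requires us to draw two parallel arcs from $v_i$ to $v_j$ inside one of the (at most two) faces containing both vertices. When several $p_{i,j}$ values are $1$ with endpoints on a common pair of faces, the new arcs must all coexist in one face without crossings. This is exactly the classical non-crossing-chords criterion on a disk: a family of chords with endpoints on a circle is simultaneously realizable iff no two chords are linked in the cyclic order. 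That is precisely what the linking equations $p_{i,j} + p_{k,l} \leq 1$ enforce, so the collection of chosen pair insertions can be drawn without crossings.

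Finally, the phrase \emph{collection of embeddings} accounts for the local choices not pinned down by the variables: for each $\loopinsert$ at a degree-$2$ vertex there is a choice of side; for each $\edgedouble$ or $\cutedgedouble$ there is a choice of which corner around the endpoints the new copy sits in; and for each $\pairinsert$ there is a choice between the two common faces of $v_i$ and $v_j$. Enumerating these choices produces every embedded planar $4$-regular multigraph $G$ that reduces to $G_1$ by the given moves. Combined with Proposition~\ref{prop:reduce}, which guarantees that every $G$ is reached by some valid assignment, this establishes the proposition.
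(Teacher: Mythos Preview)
Your overall strategy mirrors the paper's: order the moves arbitrarily, use the vertex degree equations to guarantee each move is locally possible and that the end result is $4$-regular, observe that adding edges cannot disconnect the graph, and use Proposition~\ref{prop:reduce} to argue the order of the moves is immaterial. The paper also checks, for the $c_{i,j}$ case, that the edge in question is still a cut edge of $G_i$ when we get to it (since no expansion deletes an edge, connectivity of the complement is unchanged); you gloss over this, but your commutativity remark covers it in spirit.

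Where you go wrong is in the final paragraph describing the \emph{collection of embeddings}. Your list of local choices does not match the actual definitions of the moves:
\begin{itemize}
\item For a non-cut edge with $d_{i,j}=1$, the $\edgedouble$ move creates a bigon in a uniquely determined way; there is no ``choice of corner''. The paper explicitly says this expansion yields a unique embedding of $G_{i+1}$.
\item For a cut edge with $c_{i,j}=1$, the ambiguity is not a corner choice either: it is the binary choice between performing an $\edgedouble$ (reversing) or an $\cutedgedouble$ (preserving) doubling. Both are available precisely because the edge is a cut edge.
\item Most importantly, your description of $\pairinsert$ is off. A pair insertion does not place both new edges in one of the two shared faces (with a choice of which face); it places \emph{one} new edge in \emph{each} of the two faces. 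Consequently there is no embedding choice for $\pairinsert$ at all, as the paper states.
\end{itemize}
This last misreading also colors your planarity argument: your ``non-crossing chords in a disk'' picture has the arcs from several pair insertions all living in a single face. In fact each $p_{i,j}=1$ contributes one arc to each of the two faces, and the linking inequality $p_{i,j}+p_{k,l}\le 1$ is needed because linked endpoint pairs would force crossings in \emph{both} faces simultaneously. The conclusion you draw is still correct, but the geometry behind it is not what you describe.
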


\begin{proof}
Actually, there is only a little to check. By the arguments in the proof of Proposition~\ref{prop:reduce}, the order of expansion moves is irrelevant. So suppose there are $n$ moves, and fix an order for them. We must show that we can generate a family of graphs $G_1, G_2, \dots, G_n = G$. If we can perform the indicated expansions at all, we will generate a unique connected $4$-regular planar multigraph $G$ (we will see that the embedding of $G$ depends on choices we make along the way).  So suppose we have generated a given (embedded) $G_i$, and are trying to expand to $G_{i+1}$.

If the next expansion is an $\loopinsert$ expansion indicated by a positive $l_i$, it is possible as long as the vertex degree at $v_i$ is small enough. This is true because the corresponding vertex degree equation is satisfied by hypothesis.
We must choose which side of the edge to insert the loop; each choice yields a different embedding of $G_{i+1}$, and following the various possibilities will lead to a family of embeddings for $G_n = G$.

If the next expansion is an $\edgedouble$ indicated by a positive $d_{i,j}$, it is possible as long as the vertex degrees of $v_i$ and $v_j$ are small enough. This is true by their vertex degree equations. There is only one way to make this expansion, leading to a unique embedding for $G_{i+1}$.

If the next expansion is an $\edgedouble$ or $\cutedgedouble$ expansion indicated by a positive $c_{i,j}$ variable, it is (again) possible if the vertex degrees at $v_i$ and $v_j$ are small enough (which is again true by the vertex degree equations) \emph{and} if $e_{i,j}$ is a cut edge of $G_i$. We never apply these expansions more than once to an edge, so $e_{i,j}$ is a cut edge of $G_i$ since it was a cut edge of $G_1$. Choosing between $\edgedouble$ and $\cutedgedouble$ expansions will yield different embeddings of $G_{i+1}$ and we must follow both possibilities to generate the final family of embeddings of $G$.

This much was easy. If the next expansion is of type $\pairinsert$, there is more to check. First, we note that there is no ambiguity in embeddings here: if we can do the $\pairinsert$ expansion, we can do it in only one way and we generate a unique embedding of the graph $G_{i+1}$. But can we do it at all? Each $\pairinsert$ indicated by a positive $p_{i,j}$ requires several conditions. First, vertex degrees at $v_i$, $v_j$ must be small enough, which is true as usual because the vertex degree equations are satisfied.

We last have only to observe that by the vertex degree equations, the final graph $G$ is a $4$-regular planar multigraph. Since we have only added edges along the way, $G$ is connected because $G_1$ was.
\end{proof}

We have reduced the problem to that of building and satisfying the vertex degree and linking equations. This problem is basically standard, and we use the usual branch-and-bound algorithm. We must define a canonical order on the variables (it doesn't matter how, but to be specific, in our implementation we sort the classes of variables in the order $l_i \prec d_{i,j} \prec c_{i,j} \prec p_{ij}$ and in dictionary order by the (sorted) pair $\{i,j\}$ within each class). Then we enumerate the possible assignments of $\{0,1\}$ to variables recursively, pruning the tree whenever a vertex degree or linking equation is violated. As usual, this is in theory possibly exponentially slow, but in practice efficient enough for small $n$.

We now consider the problem of dividing the results into embedded isomorphism classes. We first observe that we have already shown in Proposition~\ref{prop:reduce} two different reduced graphs $G_0$ and $G_0'$ cannot expand to the same $G$ since the embedded isomorphism type of the reduction $G_0$ is determined by the embedded isomorphism type of the expansion. However, it is possible for two different collections of expansion moves for the \emph{same} graph $G_0$ to produce isomorphic $G$ and $G'$ as in Figure~\ref{fig:expansion woes} above. Therefore, we insert the various expansions of each graph into a pdstor (see Definition~\ref{def:pdstor}) with isomorphism checking and then combine the contents of these pdstors into a final list of shadows.

%
%
%
%
%
%

\bibliography{knotprobabilitypapers,knotprob_extra}

\end{document}